\newtheorem{prop}{Proposition}[section]
\newtheorem{conjecture}{Conjecture}[section]
\newtheorem{theorem}{Theorem}[section]
\newtheorem{lemma}{Lemma}[section]
\newtheorem{remark}{Remark}[section]
\theoremstyle{definition}
\begin{document}
\title{The Random Continued Fraction Transformation}
\author{Charlene Kalle}
\address{Charlene Kalle: Mathematical Institute, University of Leiden, PO Box 9512, 2300 RA Leiden, The Netherlands} 
\email{kallecccj@math.leidenuniv.nl}
\thanks{Acknowledgements: The first author was supported by the NWO Veni-grant 639.031.140. The second author was supported by EPSRC grant EP/K029061/1.
}

\author{Tom Kempton}
\address{Tom Kempton: School of Mathematics and
Statistics
North Haugh
St Andrews
KY16 9SS
United Kingdom}
\email{tmwk@st-andrews.ac.uk}

\author{Evgeny Verbitskiy}
\address{Evgeny Verbitskiy: Mathematical Institute, University of Leiden, PO Box 9512, 2300 RA Leiden, The Netherlands \newline\indent \textup{and} \newline\indent Johann Bernoulli Institute for Mathematics and Computer Science, University of Groningen, PO Box 407, 9700 AK, Groningen, The Netherlands} 
\email{e.a.verbitskiy@rug.nl}

\keywords{Continued fractions, 
random dynamical systems, absolutely continuous invariant measures,
transfer operator}

\subjclass[2010]{Primary: 37C40, 11K50}
\begin{abstract}
We introduce a random dynamical system related to continued fraction expansions. It uses random combination of the Gauss map and the R\'enyi (or backwards) continued fraction map. We explore the continued fraction expansions that this system produces as well as the dynamical properties of the system.
\end{abstract}

\maketitle

\section{Introduction}
In 1913 (\cite{Per13}) Perron described an algorithm producing finite and infinite continued fraction expansions of real numbers of the form
\[ x =  d_0 + \cfrac{\epsilon_0}{d_1 + \cfrac{\epsilon_1}{d_2 + \ddots + \cfrac{\epsilon_{n-1}}{d_n+\ddots}}}, \]
where $d_0 \in \mathbb Z$ and for each $n \ge 1$, $\epsilon_{n-1}\in\{-1,1\}$, $d_n\in\mathbb N$ and $d_n + \epsilon_n \ge 1$. Moreover, in case the continued fraction is infinite, the algorithm guarantees that $d_n + \epsilon_{n+1} \ge 1$ infinitely often. Perron called these expansions semi-regular continued fractions. 

Within this framework one can see a number of more familiar systems of continued fractions, each of which can be studied by a corresponding dynamical system. Regular continued fractions, which correspond to letting each $\epsilon_n=1$, are generated by the Gauss map $Tx = \frac1{x}$ (mod 1). Backwards continued fractions, which were introduced by R\'enyi in \cite{Ren57}, correspond to $\epsilon_n=-1$. Odd and even continued fractions (\cite{HK02}) and $\alpha$- continued fractions (\cite{Nak81}) can also be seen within this framework. In this article we define a random dynamical system which allows one to generate all semi-regular continued fraction expansions of the form (\ref{q:cfeqn}) for any given $x$  and to study their dynamical and ergodic properties. We do not require the condition that $d_n + \epsilon_{n+1} \ge 1$ infinitely often, which makes our set-up slightly more general.


Over the last decade there has been a great deal of work on random dynamical systems. A random system is given by a finite family of maps defined on the same state space and a probabilistic regime for choosing one of these maps at each time step. The study of conditions that guarantee the existence of an invariant measure for such systems was initiated by Morita (\cite{Mor85}) and Pelikan (\cite{Pel84}). They consider the case where each map in the random system is piecewise smooth with respect to some finite partition and expanding on average. In \cite{GB03} and \cite{BG05} these results are extended to when the probabilistic regime is position dependent. These results were further generalised by Inoue (\cite{Ino12}) to more general underlying partitions, including ones with countably many elements. In \cite{ANV14} limit theorems are studied for random systems consisting of countably many maps. Their examples include families of maps that are piecewise smooth with respect to some finite partition and are expanding on average. Rousseau and Todd studied hitting time statistics for random maps \cite{RT}.

Random dynamical systems have also been used in relation to representations of numbers, see for example \cite{Mor}. In particular, the introduction of the random $\beta$-transformation has opened up new approaches to studying the dynamical and ergodic properties of $\beta$-expansions and Bernoulli convolutions, see \cite{DdV05,DK13,Kem13}. A key part of this approach was to study the invariant measures and ergodic properties of the random $\beta$-transformation, as was done in \cite{DdV05,DdV07,Kem14}. 

In this article we first introduce the random continued fraction map $K$. We show that it generates convergent continued fraction expansions for all points in its domain and we explore some of the properties of these expansions. The map $K$, and a related map $R$ which is easier to analyse, present several challenges which are interesting from a purely dynamical point of view. In particular, $R$ has countably many discontinuities and one of the interval maps defining $R$ has an indifferent fixed point. To overcome these difficultieswe build on the work of Inoue \cite{Ino12}, who studied transfer operators for countably branched skew product systems that are expanding on average. We can then use the results from \cite{ANV14} to obtain limit theorems. Other specific examples of random intermittent systems have been recently considered in \cite{BBD14}. Here Bahsoun, Bose and Duan studied limit theorems and mixing rates for random combinations of maps that are variations of the Manneville-Pomeau map.


The paper is outlined as follows. In Section 2 we first show that every expansion of the form (\ref{q:cfeqn}) is generated by $K$, and study further dynamical properties of random continued fractions. In Section 3 we prove that there exists an absolutely continuous invariant measure for $R$. The density of this measure is of bounded variation. A key part of our approach here is to study the transfer operator associated with $R$. We show that $R$ satisfies conditions of Inoue \cite{Ino12} which gives that the transfer operator is quasi-compact. In Section 4 we use this to show that the invariant measure is fully supported. We can then employ results from \cite{ANV14} to obtain that $R$ is mixing and that the Central Limit Theorem and Large Deviation Principle hold. Numerical evidence seems to suggest that the density in fact is quite smooth. In the last section we discuss this in more detail and also mention some open questions and future directions.


\section{The random map}

\subsection{Definition of the map}
It is clear that $x\in\mathbb R$ has an expansion of the form
\begin{equation}\label{q:cfeqn}
x =  \cfrac{\epsilon_0}{d_1 + \cfrac{\epsilon_1}{d_2 + \ddots + \cfrac{\epsilon_{n-1}}{d_n+\ddots}}},
\end{equation}
where $\epsilon_{n-1} \in\{-1,1\}$, $d_n\in\mathbb N$ and $d_n + \epsilon_n \ge 1$ for all $n \in \mathbb N$ if and only if $x \in [-1,1]\backslash \{0\}$. If $|x| >1$, we can subtract a suitable integer $d_0$ from $x$ and use the representation from (\ref{q:cfeqn}) for $x-d_0$ to obtain a continued fraction expansion of $x$.

To find the right definition of the random dynamical system $K$, we first ask which $x\in [-1,1]$ have expansions that begin with a given choice of $\epsilon_0$ and $d_1$. Let $\epsilon_0$ and $d_1>1$ be given. Then $x$ can be written in the form (\ref{q:cfeqn}) if and only if $x=\frac{\epsilon_0}{d_1+y}$ for some $y\in[-1,1]\setminus \{0\}$. This can be satisfied if and only if $\epsilon_0= sgn(x)$ and
\[
|x|=\epsilon_0 x\in\left(\dfrac{1}{d_1+1}, \dfrac{1}{d_1-1}\right).
\]
This typically gives two choices of the digit $d_1$. We see that
\[
y=\dfrac{\epsilon_0}{x}-d_1=\frac{1}{|x|}-d_1.
\]
Similarly, if $d_1=1$ then we require $\epsilon_1=1$ and have $x=\frac{\epsilon_0}{d_1+y}$ for some $y\in(0,1]$. This can be satisfied for $|x|\in(\frac{1}{2},1]$ and we have
\[
y=\dfrac{1}{|x|}-1.
\]
As is standard with dynamical constructions of expansions of real numbers, the possible values of $\epsilon_1, d_2$ are equal to the values of $\epsilon_0, d_1$ associated with $y=\frac{1}{|x|}-d_1$. Thus we can generate all expansions of the form (\ref{q:cfeqn}) using the following transformation.

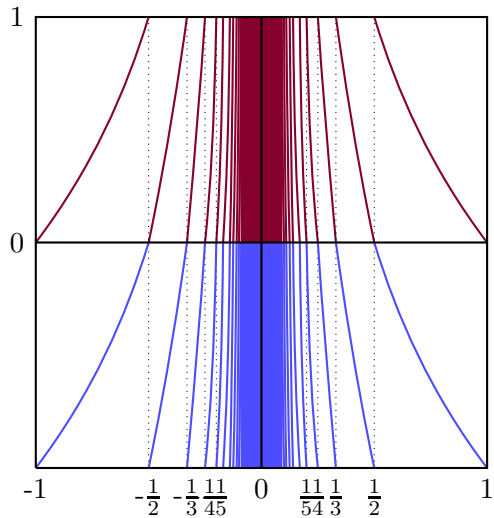
\begin{figure}[h]
\begin{center}
\begin{tikzpicture}[scale=3]
\filldraw[fill=purple!70!black, draw=purple!70!black] (0,0) rectangle (.09,1);
\filldraw[fill=purple!70!black, draw=purple!70!black] (0,0) rectangle (-.09,1);
\filldraw[fill=blue!70, draw=blue!70] (0,-1) rectangle (.09,0);
\filldraw[fill=blue!70, draw=blue!70] (0,-1) rectangle (-.09,0);
\draw[thick, purple!70!black] (1,0) .. controls (.75,.33) and (.6,.67) .. (.5,1);
\draw[thick, purple!70!black] (.5,0) .. controls (.42,.38) and (.36,.78) .. (.33,1);
\draw[thick, purple!70!black] (.33,0) .. controls (.29,.45) and (.27,.7) .. (.25,1);
\draw[thick, purple!70!black] (.25,0) .. controls (.225,.25) and (.21,.5) .. (.2,1);
\draw[thick, purple!70!black] (.2,0) .. controls (.185,.25) and (.175,.5) .. (.17,1);
\draw[thick, purple!70!black] (.17,0) .. controls (.155,.25) and (.145,.5) .. (.14,1);
\draw[thick, purple!70!black] (.14,0) .. controls (.133,.25) and (.13,.5) .. (.125,1);
\draw[thick, purple!70!black] (.125,0) .. controls (.118,.25) and (.115,.5) .. (.11,1);
\draw[thick, purple!70!black] (.11,0) .. controls (.105,.25) and (.102,.5) .. (.1,1);
\draw[thick, purple!70!black] (.1,0) .. controls (.095,.25) and (.092,.5) .. (.09,1);
\draw[thick, purple!70!black] (-1,0) .. controls (-.75,.33) and (-.6,.67) .. (-.5,1);
\draw[thick, purple!70!black] (-.5,0) .. controls (-.42,.38) and (-.36,.78) .. (-.33,1);
\draw[thick, purple!70!black] (-.33,0) .. controls (-.29,.45) and (-.27,.7) .. (-.25,1);
\draw[thick, purple!70!black] (-.25,0) .. controls (-.225,.25) and (-.21,.5) .. (-.2,1);
\draw[thick, purple!70!black] (-.2,0) .. controls (-.185,.25) and (-.175,.5) .. (-.17,1);
\draw[thick, purple!70!black] (-.17,0) .. controls (-.155,.25) and (-.145,.5) .. (-.14,1);
\draw[thick, purple!70!black] (-.14,0) .. controls (-.133,.25) and (-.13,.5) .. (-.125,1);
\draw[thick, purple!70!black] (-.125,0) .. controls (-.118,.25) and (-.115,.5) .. (-.11,1);
\draw[thick, purple!70!black] (-.11,0) .. controls (-.105,.25) and (-.102,.5) .. (-.1,1);
\draw[thick, purple!70!black] (-.1,0) .. controls (-.095,.25) and (-.092,.5) .. (-.09,1);
\draw[dotted](.2,-1)--(.2,1)(.25,-1)--(.25,1)(.33,-1)--(.33,1)(.5,-1)--(.5,1)(-.2,-1)--(-.2,1)(-.25,-1)--(-.25,1)(-.33,-1)--(-.33,1)(-.5,-1)--(-.5,1);

\draw[thick, blue!70] (1,-1) .. controls (.75,-.67) and (.6,-.33) .. (.5,0);
\draw[thick, blue!70] (.5,-1) .. controls (.42,-.62) and (.36,-.22) .. (.33,0);
\draw[thick, blue!70] (.33,-1) .. controls (.29,-.55) and (.27,-.3) .. (.25,0);
\draw[thick, blue!70] (.25,-1) .. controls (.225,-.75) and (.21,-.5) .. (.2,0);
\draw[thick, blue!70] (.2,-1) .. controls (.185,-.75) and (.175,-.5) .. (.17,0);
\draw[thick, blue!70] (.17,-1) .. controls (.155,-.75) and (.145,-.5) .. (.14,0);
\draw[thick, blue!70] (.14,-1) .. controls (.133,-.75) and (.13,-.5) .. (.125,0);
\draw[thick, blue!70] (.125,-1) .. controls (.118,-.75) and (.115,-.5) .. (.11,0);
\draw[thick, blue!70] (.11,-1) .. controls (.105,-.75) and (.102,-.5) .. (.1,0);
\draw[thick, blue!70] (.1,-1) .. controls (.095,-.75) and (.092,-.5) .. (.09,0);
\draw[thick, blue!70] (-1,-1) .. controls (-.75,-.67) and (-.6,-.33) .. (-.5,0);
\draw[thick, blue!70] (-.5,-1) .. controls (-.42,-.62) and (-.36,-.22) .. (-.33,0);
\draw[thick, blue!70] (-.33,-1) .. controls (-.29,-.55) and (-.27,-.3) .. (-.25,0);
\draw[thick, blue!70] (-.25,-1) .. controls (-.225,-.75) and (-.21,-.5) .. (-.2,0);
\draw[thick, blue!70] (-.2,-1) .. controls (-.185,-.75) and (-.175,-.5) .. (-.17,0);
\draw[thick, blue!70] (-.17,-1) .. controls (-.155,-.75) and (-.145,-.5) .. (-.14,0);
\draw[thick, blue!70] (-.14,-1) .. controls (-.133,-.75) and (-.13,-.5) .. (-.125,0);
\draw[thick, blue!70] (-.125,-1) .. controls (-.118,-.75) and (-.115,-.5) .. (-.11,0);
\draw[thick, blue!70] (-.11,-1) .. controls (-.105,-.75) and (-.102,-.5) .. (-.1,0);
\draw[thick, blue!70] (-.1,-1) .. controls (-.095,-.75) and (-.092,-.5) .. (-.09,0);

\draw[thick](-1,-1)node[below]{\small -1}--(-.5,-1)node[below]{\small -$\frac12$}--(-.33,-1)node[below]{\small -$\frac13$}--(-.25,-1)node[below]{\small -$\frac14$}--(-.2,-1)node[below]{\small -$\frac15$}--(0,-1)node[below]{\small 0}--(.2,-1)node[below]{\small $\frac15$}--(.25,-1)node[below]{\small $\frac14$}--(.33,-1)node[below]{\small $\frac13$}--(.5,-1)node[below]{\small $\frac12$}--(1,-1)node[below]{\small 1}--(1,1)--(-1,1)node[left]{\small 1}--(-1,0)node[left]{\small 0}--(-1,-1)(-1,0)--(1,0)(0,-1)--(0,1);
\end{tikzpicture}
\caption{The map $T_0$ in red and $T_1$ in blue.}
\label{f:complete}
\end{center}
\end{figure}

Let $\Omega = \{ (\omega_k)_{k \ge 1} \, : \, \omega_k \in \{0,1\} \} = \{ 0,1 \}^{\mathbb N}$ and let $\sigma:\Omega\to\Omega$ be the left shift. We define the random continued fraction map $K: \Omega \times [-1,1] \to \Omega \times [-1,1]$ by setting $K(\omega, 0)=(\sigma(\omega),0)$ and for $|x| \in \big( \frac1{k+1}, \frac1{k} \big]$,
\[ K(\omega, x) = \Big(\sigma(\omega), \Big| \frac1x \Big| - (k + \omega_1) \Big).\]

One can think of the map $K$ as follows. Let maps $T_0, T_1 : [0,1] \to [0,1]$ be the Gauss and R\'enyi maps respectively, given by
\begin{equation}\label{q:gaussrenyi}
T_0 x = \left\{
\begin{array}{ll}
0, & \text{if } x =0,\\
\\
\displaystyle \frac1x \,\text{ (mod 1)}, & \text{otherwise},
\end{array}\right.
\quad \text{and} \quad T_1 x = \left\{
\begin{array}{ll}
0, & \text{if } x=1,\\
\\
\displaystyle \frac1{1-x} \, \text{ (mod 1)}, & \text{otherwise}.
\end{array}\right.
\end{equation}
Let $\pi: \Omega \times [-1,1]\to [-1,1]$ be given by $\pi(\omega,x)=x$. Then
\[ \pi \big( K(\omega,x) \big) = \left\{
\begin{array}{ll}
T_0 x - \omega_1, & \text{if } x > 0,\\
\\
T_1 (x+1) - \omega_1, & \text{if } x < 0.\\
\end{array}\right.\]
Set $\omega_0 = 0$ if $x \ge 0$ and 1 otherwise. This gives
\begin{equation}\label{q:pi-K}
\pi \big( K(\omega,x) \big) =  T_{\omega_0} (x + \omega_0) - \omega_1.
\end{equation}
Note that $\pi\big(K(\omega,x)\big) \in [0,1]$ if $\omega_1=0$ and $\pi\big(K(\omega,x)\big) \in [-1,0]$ if $\omega_1=1$. By iterating we see that for each $n \ge 1$ and all $(\omega,x) \in \Omega \times [-1,1]$, such that $\pi \big( K^m (\omega,x) \big) \neq 0$ for $0 \le m <n$, we have
\[ \pi \big( K^n (\omega,x) \big) = ( T_{\omega_{n-1}} \circ \cdots \circ T_{\omega_0} )(x + \omega_0) - \omega_n.\]

\subsection{Random continued fraction expansions}

For $n=1$, set
\[ d_1=d_1(\omega, x) = \left\{
\begin{array}{ll}
k + \omega_1,& \text{if } |x| \in \Big( \frac{1}{k+1} , \frac1k \Big],\\
\\
\infty, & \text{if } x=0,
\end{array} \right.
\]
and for $n \ge 2$, define $d_n(\omega, x) = d_1\big(K^{n-1}(\omega, x)\big)$. We have
\begin{equation}\label{q:K}
\pi \big(K(\omega, x) \big) = (-1)^{\omega_0} \frac1x - d_1.
\end{equation}
and for each $n \ge 1$ such that $\pi(K^m(\omega, x)) \neq 0$ for all $0 \le m \le n$,
\[ x = \frac{(-1)^{\omega_0}}{d_1+\pi(K(\omega, x))} = \cfrac{(-1)^{\omega_0}}{d_1+\cfrac{(-1)^{\omega_1}}{d_2+\pi(K(\omega, x))}} = \cdots =  \cfrac{(-1)^{\omega_0}}{d_1 + \cfrac{(-1)^{\omega_1}}{d_2 + \ddots + \cfrac{(-1)^{\omega_{n-1}}}{d_n+\pi(K^n(\omega,x))}}}.\]

The digit sequence $\big( d_n (\omega,x) \big)_{n \ge 1}$ represents the continued fraction representation of the pair $(\omega,x)$ as given by $K$. If there is a smallest integer $n$ such that $d_n(\omega,x)=\infty$, then $(\omega,x)$ has {\bf finite random continued fraction expansion}
\[x =  \cfrac{(-1)^{\omega_0}}{d_1 + \cfrac{(-1)^{\omega_1}}{d_2 + \ddots + \cfrac{(-1)^{\omega_{n-2}}}{d_{n-1}}}}.\]
Now suppose that $d_n(\omega,x)$ is finite for all $n \ge 1$. We want to show that $(\omega,x)$ has {\bf infinite random continued fraction expansion}
\[ x =  \cfrac{(-1)^{\omega_0}}{d_1 + \cfrac{(-1)^{\omega_1}}{d_2 + \ddots + \cfrac{(-1)^{\omega_{n-1}}}{d_n+\ddots}}}.\]
For each $n \ge 1$, let $\frac{p_n}{q_n}$ denote the convergents of the random continued fraction of $(\omega,x)$, i.e., write
\[ \frac{p_n}{q_n} = \cfrac{(-1)^{\omega_0}}{d_1 + \cfrac{(-1)^{\omega_1}}{d_2 + \ddots + \cfrac{(-1)^{\omega_{n-1}}}{d_n}}}.\]
\subsection{Convergence of Partial Sums}
To show convergence, we follow the approach for regular continued fractions with the important difference that in our case the numbers $q_n$ do not necessarily increase. As for regular continued fractions (see for example \cite{DK02}), one can obtain the following relations:
\begin{align*}
p_{-1} =1,\quad p_0=0, & \hspace{.5cm} p_n=d_n p_{n-1}+(-1)^{\omega_{n-1}} p_{n-2},\\
q_{-1}=0, \quad q_0=1, & \hspace{.5cm} q_n = d_n q_{n-1} + (-1)^{\omega_{n-1}} q_{n-2}.
\end{align*}
Using these recurrences, induction easily gives that
\begin{equation}\label{q:xfrac}
x = \frac{p_n+p_{n-1}\pi \big( K^n(\omega,x) \big)}{q_n+q_{n-1}\pi \big( K^n(\omega,x) \big)}
\end{equation}
and that
\begin{equation}\label{q:pnqnminus}
p_{n-1}q_n - p_nq_{n-1} = (-1)^n(-1)^{\omega_0 + \cdots + \omega_{n-1}}.
\end{equation}
The next lemmas are needed to show that although the sequence $\{q_n\}_{n \ge 1}$ is not necessarily increasing, we still have $\lim_{n \to \infty} \frac1{q_n}=0$.

\begin{lemma}\label{l:qnsmall}
For each $n \ge 2$, $q_n >0$ and if $q_n \le q_{n-1}$, then $\omega_{n-1}=1$ and $d_n=1$, so $\omega_n=0$.
\end{lemma}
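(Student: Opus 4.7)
The plan is to prove both assertions simultaneously by strong induction on $n \ge 2$, using the recurrence $q_n = d_n q_{n-1} + (-1)^{\omega_{n-1}} q_{n-2}$ together with the arithmetic constraint on digits that is built into the definition of $K$: from $d_n = k + \omega_n$ for some integer $k \ge 1$, we have $d_n \ge 1 + \omega_n$, so $d_n = 1$ forces $\omega_n = 0$, and conversely $\omega_n = 1$ forces $d_n \ge 2$. This equivalence is the only ``external'' fact beyond the $q$-recurrence that is needed.

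For the base case $n = 2$, I would compute $q_2 - q_1 = (d_2 - 1) d_1 + (-1)^{\omega_1}$ directly. When $\omega_1 = 0$ this is at least $1$, so $q_2 > q_1 > 0$ and the conditional part is vacuous. When $\omega_1 = 1$ we have $d_1 \ge 2$, so $q_2 = d_2 d_1 - 1 \ge 1 > 0$, and $q_2 \le q_1$ forces $(d_2-1) d_1 \le 1$, which together with $d_1 \ge 2$ yields $d_2 = 1$ and hence $\omega_2 = 0$. For the inductive step, I would assume the lemma at all steps up to $n$ and expand $q_{n+1} = d_{n+1} q_n + (-1)^{\omega_n} q_{n-1}$. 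If $\omega_n = 0$, then $q_{n+1} \ge q_n + q_{n-1} > q_n > 0$, giving positivity and vacuously satisfying the implication.

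If instead $\omega_n = 1$, then $d_n \ge 2$, so the inductive hypothesis at step $n$ rules out $q_n \le q_{n-1}$ (which would force $d_n = 1$); hence $q_n > q_{n-1}$, which gives $q_{n+1} = d_{n+1} q_n - q_{n-1} \ge q_n - q_{n-1} > 0$. Moreover, $q_{n+1} \le q_n$ then means $(d_{n+1} - 1) q_n \le q_{n-1} < q_n$, forcing $d_{n+1} = 1$ and consequently $\omega_{n+1} = 0$, as required. The only conceptual point beyond routine bookkeeping is this use of the inductive conditional: one needs the previous step's conclusion to eliminate the subcase $q_n \le q_{n-1}$ before one can show that the subtraction in $q_{n+1} = d_{n+1} q_n - q_{n-1}$ does not overshoot $0$. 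Everything else reduces to inserting the bounds $d_k \ge 1$ and $d_k \ge 2$ (when $\omega_k = 1$) into the recurrence.
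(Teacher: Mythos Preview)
Your proof is correct and follows essentially the same inductive argument as the paper. The only cosmetic difference is the case split: the paper divides the inductive step according to whether $q_n > q_{n-1}$ or $q_n \le q_{n-1}$, whereas you split on $\omega_n = 0$ versus $\omega_n = 1$; in both versions the crux is using the inductive hypothesis contrapositively (via the digit constraint $\omega_{n-1}=1 \Rightarrow d_{n-1}\ge 2$) to secure $q_{n-1}>q_{n-2}$ (respectively $q_n>q_{n-1}$) before drawing the conclusion.
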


\begin{proof}
We prove this by induction. For $n=1$ we have $q_1 = d_1 \ge 1 = q_0$. For $n=2$, we have $q_2 = d_2d_1 + (-1)^{\omega_1}$ and hence
\[ q_2 \le q_1 \quad \Leftrightarrow \quad d_2 \le 1-\frac{(-1)^{\omega_1}}{d_1}.\]
This can happen only if $\omega_1=1$, thus $d_1>1$, and $d_2=1$. This gives $q_2 = d_1-1>0$ and the lemma. Now suppose the statements hold for all $k < n$, i.e., $q_k >0$  and if $q_k \le q_{k-1}$, then $\omega_{k-1}=1$ and $d_k=1$, so $\omega_k=0$. If $q_n > q_{n-1}$, then automatically $q_n >0$. So, suppose $q_n \le q_{n-1}$. We have
\[ q_n = d_n q_{n-1} + (-1)^{\omega_{n-1}}q_{n-2} \le q_{n-1} \quad \Leftrightarrow \quad 1 \le d_n \le 1-(-1)^{\omega_{n-1}}\frac{q_{n-2}}{q_{n-1}}.\]
Since $q_{n-2}, q_{n-1}>0$, $q_n \le q_{n-1}$ implies that $\omega_{n-1}=1$ and hence $d_{n-1}>1$. The induction hypothesis then gives that $\frac{q_{n-2}}{q_{n-1}} < 1$ and hence $1 \le d_n < 2$. This gives the lemma.
\end{proof}

\begin{lemma}\label{l:bigger}
If $q_n \le q_{n-1}$, then $q_{n-2}< q_{n-1}<q_{n+1}$. This implies that $q_n \neq q_{n-1}$.
\end{lemma}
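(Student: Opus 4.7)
The plan is to feed the conclusions of Lemma~\ref{l:qnsmall} directly into the recurrence for $q_n$. By hypothesis $q_n\le q_{n-1}$, so Lemma~\ref{l:qnsmall} already tells us that $\omega_{n-1}=1$, $d_n=1$, and $\omega_n=0$. Substituting these three facts into
\[
q_n = d_n q_{n-1} + (-1)^{\omega_{n-1}} q_{n-2}
\]
collapses the recurrence to the very rigid identity $q_n = q_{n-1}-q_{n-2}$, and this single equation will do essentially all the work.

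From $q_n = q_{n-1}-q_{n-2}$ together with $q_n>0$ (also provided by Lemma~\ref{l:qnsmall}) I immediately get $q_{n-1}>q_{n-2}$, which is the first inequality. For the second, I apply the recurrence once more, this time using $\omega_n=0$:
\[
q_{n+1} = d_{n+1} q_n + (-1)^{\omega_n} q_{n-2+1} = d_{n+1} q_n + q_{n-1}.
\]
Since $d_{n+1}\ge 1$ and $q_n>0$, this gives $q_{n+1}\ge q_n+q_{n-1}>q_{n-1}$, as required.

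For the final assertion $q_n\ne q_{n-1}$, suppose for contradiction that $q_n=q_{n-1}$. The identity $q_n=q_{n-1}-q_{n-2}$ then forces $q_{n-2}=0$. But $q_0=1$, $q_1=d_1\ge 1$, and Lemma~\ref{l:qnsmall} guarantees $q_k>0$ for all $k\ge 2$, so $q_{n-2}>0$ in every admissible case $n\ge 2$; this contradiction closes the proof. I do not anticipate any real obstacle here: the lemma is essentially a direct corollary of Lemma~\ref{l:qnsmall}, and the only mild care needed is to handle the boundary value $n=2$ separately when checking $q_{n-2}>0$, which is immediate from $q_0=1$.
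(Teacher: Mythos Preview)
Your proof is correct and follows essentially the same route as the paper's: both invoke Lemma~\ref{l:qnsmall} to obtain $\omega_{n-1}=1$, $d_n=1$, $\omega_n=0$, and then read the two inequalities straight off the recurrence. The only cosmetic differences are that the paper deduces $q_{n-1}>q_{n-2}$ by the contrapositive of Lemma~\ref{l:qnsmall} (since $\omega_{n-1}=1$ rules out $q_{n-1}\le q_{n-2}$) rather than from $q_n=q_{n-1}-q_{n-2}>0$, and it phrases the ``$q_n\ne q_{n-1}$'' contradiction as ``$d_n$ would fail to be an integer'' rather than ``$q_{n-2}=0$''; these are equivalent observations.
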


\begin{proof}
By the previous lemma we have $\omega_{n-1}=1$, so $q_{n-1}>q_{n-2}$. We also have $d_n =1$ and thus $\omega_n=0$. Since $q_n >0$, this implies that $q_{n+1} = d_{n+1} q_n + q_{n-1} > q_{n-1}$. For the second part, note that if $q_n = q_{n-1}$, then $d_n = 1-(-1)^{\omega_{n-1}} \frac{q_{n-2}}{q_{n-1}}$ and by the previous lemma $q_{n-1}>q_{n-2}$. Since $d_n$ is an integer, this is not possible.
\end{proof}

Before we can prove that the process converges, we need a lower bound on the $q_n$'s in case $q_n < q_{n-1}$. This is done in the next lemma.
\begin{lemma}\label{l:estimates}
Suppose $q_n < q_{n-1}$, so $d_{n-1}>1$.
\begin{itemize}
\item[(i)] If $d_{n-1} > 2$, then $q_n > q_{n-2}$.
\item[(ii)] If $d_{n-1}=2$ and $(\omega_k,d_k) = (1,2)$ for all $1 \le k \le n-1$, then $q_n=1$ and $q_{n-1}=n$.
\item[(iii)] Suppose $d_{n-1}=2$ and there is a $1\le k < n-1$ such that $( \omega_k ,d_k) \neq (1,2)$. Let $k$ be the largest such index. Then $q_n > q_{k-1}$.
\end{itemize}
\end{lemma}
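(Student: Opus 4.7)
The plan is to exploit a single base identity throughout: since $q_n < q_{n-1}$, Lemma~\ref{l:qnsmall} forces $\omega_{n-1} = 1$ and $d_n = 1$, so the recurrence collapses to
\[
q_n = q_{n-1} - q_{n-2}.
\]
I will also repeatedly use the Perron-type constraint $\omega_j = 1 \Rightarrow d_j \ge 2$, which is read off the definition of $K$, since for $|x_{j-1}| \in \bigl(\tfrac{1}{k+1}, \tfrac{1}{k}\bigr]$ with $k \ge 1$ one has $d_j = k + \omega_j$.

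For part (i), I expand $q_{n-1} = d_{n-1} q_{n-2} + (-1)^{\omega_{n-2}} q_{n-3}$ with $d_{n-1} \ge 3$. If $\omega_{n-2} = 0$ the inequality $q_{n-1} \ge 3q_{n-2} + q_{n-3} > 2q_{n-2}$ is immediate, while if $\omega_{n-2} = 1$ then the contrapositive of Lemma~\ref{l:qnsmall} at index $n-2$ gives $q_{n-3} < q_{n-2}$ and hence $q_{n-1} \ge 3q_{n-2} - q_{n-3} > 2q_{n-2}$; either way $q_n = q_{n-1} - q_{n-2} > q_{n-2}$. For part (ii), the recurrence $q_j = 2q_{j-1} - q_{j-2}$, valid for $2 \le j \le n-1$ since $\omega_{j-1} = 1$ and $d_j = 2$ there, combined with $q_0 = 1$ and $q_1 = d_1 = 2$, yields $q_j = j+1$ for $0 \le j \le n-1$ by a routine induction, and then the base identity gives $q_n = n - (n-1) = 1$.

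Part (iii) is the substantive step. Because $(\omega_j, d_j) = (1,2)$ for every $k+1 \le j \le n-1$, the recurrence $q_j = 2q_{j-1} - q_{j-2}$ holds for all $k+2 \le j \le n-1$, so the successive differences $q_j - q_{j-1}$ are constant throughout that range. Their common value is pinned down at index $k+1$, where $\omega_k$ is not yet constrained by the $(1,2)$-block: a direct computation gives $q_{k+1} - q_k = q_k + (-1)^{\omega_k} q_{k-1}$. Combining with the base identity yields
\[
q_n = q_{n-1} - q_{n-2} = q_{k+1} - q_k = q_k + (-1)^{\omega_k} q_{k-1}.
\]
If $\omega_k = 0$ then $q_n = q_k + q_{k-1} > q_{k-1}$ is immediate. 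The main obstacle is the case $\omega_k = 1$: here the hypothesis $(\omega_k, d_k) \neq (1,2)$ combined with the Perron constraint forces $d_k \ge 3$, and expanding $q_k = d_k q_{k-1} + (-1)^{\omega_{k-1}} q_{k-2}$ and splitting on $\omega_{k-1}$ exactly as in part (i) --- using Lemma~\ref{l:qnsmall} to ensure $q_{k-2} < q_{k-1}$ whenever $\omega_{k-1} = 1$ (and noting the edge case $k=1$ where $q_{-1}=0$ makes the estimate trivial) --- gives $q_k > 2 q_{k-1}$, so that $q_n = q_k - q_{k-1} > q_{k-1}$, as required.
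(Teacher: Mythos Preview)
Your proof is correct and follows essentially the same approach as the paper's: both establish the base identity $q_n = q_{n-1} - q_{n-2}$, telescope the constant differences through the $(1,2)$-block in parts (ii) and (iii) to reduce to $q_n = q_k + (-1)^{\omega_k} q_{k-1}$, and handle the $\omega_k=1$ case by expanding $q_k$ with $d_k \ge 3$. The only cosmetic difference is that you case-split on the value of $\omega_{n-2}$ (resp.\ $\omega_{k-1}$) and invoke the contrapositive of Lemma~\ref{l:qnsmall}, whereas the paper case-splits on the relative size of $q_{n-2}$ and $q_{n-3}$ (resp.\ $q_{k-1}$ and $q_{k-2}$) and then reads off the sign of $\omega$; these are equivalent via the same lemma.
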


\begin{proof}
Recall that $q_n < q_{n-1}$ implies that $d_n=1$, $\omega_n=0$ and $\omega_{n-1}=1$, so $d_{n-1}>1$. Hence, $q_n = q_{n-1}-q_{n-2}$.\\
(i) If $d_{n-1} >2$, then 
\begin{eqnarray*}
q_n &=& d_{n-1} q_{n-2} + (-1)^{\omega_{n-2}}q_{n-3} -q_{n-2} = (d_{n-1}-1)q_{n-2} + (-1)^{\omega_{n-2}}q_{n-3}\\
& \ge & 2 q_{n-2} + (-1)^{\omega_{n-2}}q_{n-3}.
\end{eqnarray*}
If $q_{n-2} > q_{n-3}$, then $q_n \ge 2 q_{n-2} - q_{n-3} > q_{n-2}$. If $q_{n-2} < q_{n-3}$, then Lemma~\ref{l:qnsmall} gives that $\omega_{n-2}=0$ and hence $q_n \ge 2 q_{n-2} + q_{n-3} > q_{n-3} > q_{n-2}$.\\
For both (ii) and (iii), note that if $d_k=2$ and $\omega_k=1$ for some $1\le k \le n-1$, then $q_k = 2q_{k-1} -q_{k-2}$, so
\begin{equation}\label{q:qk-qk-1}
q_k - q_{k-1}=q_{k-1}-q_{k-2}.
\end{equation}
(ii) From (\ref{q:qk-qk-1}) it follows that
\[ q_n =q_{n-1}-q_{n-2} = q_2 -q_1 = 2\cdot 2 +(-1)^1 -2 =1.\]
Moreover, for each $1 \le k \le n-1$ it holds that
\[ q_k = 2q_{k-1}-q_{k-2} = q_{k-1} + (q_{k-1}-q_{k-2})=q_{k-1}+1.\]
Hence, $q_{n-1} = n-2+q_1=n$.\\
(iii) Let $k$ be as given in the lemma, so $(\omega_k,d_k) \neq (1,2)$ and $(\omega_j, d_j)=(1,2)$ for $k+1 \le j \le n-1$. Then,
\[ q_n = q_{n-1}-q_{n-2} = q_{k+1}-q_k = d_{k+1}q_k + (-1)^{\omega_k}q_{k-1}-q_k = q_k + (-1)^{\omega_k}q_{k-1}.\]
If $\omega_k =0$, then $q_n = q_k + q_{k-1}> q_{k-1}$. If $\omega_k=1$, then $d_k \ge 3$ and $q_k > q_{k-1}$ by Lemma~\ref{l:qnsmall}. This gives
\[ q_n =q_k - q_{k-1} = (d_k -1)q_{k-1} +(-1)^{\omega_{k-1}}q_{k-2} \ge 2q_{k-1} + (-1)^{\omega_{k-1}}q_{k-2}.\]
As in the proof of part (i) we now get that if $q_{k-1} > q_{k-2}$, then $q_n > q_{k-1}$ and if $q_{k-1} < q_{k-2}$, then $\omega_{k-1}=0$ and we also get $q_n > q_{k-1}$.
\end{proof}

\begin{prop}
Let $x \in [-1,1]\backslash \mathbb Q$. For each $\omega$, the digits $d_n(\omega, x)$ give a continued fraction expansion of $x$.
\end{prop}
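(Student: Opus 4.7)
My plan is to derive an explicit error formula for $x-p_n/q_n$ from (\ref{q:xfrac}) and (\ref{q:pnqnminus}), reduce convergence to the claim $q_n\to\infty$, and then establish this claim using Lemmas~\ref{l:qnsmall}--\ref{l:estimates}.

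Subtracting $p_n/q_n$ in (\ref{q:xfrac}) and using (\ref{q:pnqnminus}) yields
$$x-\frac{p_n}{q_n}=\frac{(p_{n-1}q_n-p_nq_{n-1})\,\pi(K^n(\omega,x))}{q_n\bigl(q_n+q_{n-1}\pi(K^n(\omega,x))\bigr)}=\frac{\pm\,\pi(K^n(\omega,x))}{q_n\bigl(q_n+q_{n-1}\pi(K^n(\omega,x))\bigr)}.$$
Since $|\pi(K^n(\omega,x))|\le 1$, I need a lower bound on the denominator. I would split on $\omega_n$: if $\omega_n=0$ then $\pi(K^n(\omega,x))\in[0,1)$, so $q_n+q_{n-1}\pi(K^n(\omega,x))\ge q_n$; if $\omega_n=1$ then the contrapositive of Lemma~\ref{l:qnsmall} forces $q_n>q_{n-1}$, and $\pi(K^n(\omega,x))\in[-1,0)$ gives $q_n+q_{n-1}\pi(K^n(\omega,x))\ge q_n-q_{n-1}\ge 1$. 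Either way $q_n\bigl(q_n+q_{n-1}\pi(K^n(\omega,x))\bigr)\ge q_n$, hence $|x-p_n/q_n|\le 1/q_n$.

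It then remains to prove $q_n\to\infty$. Call $n$ \emph{bad} if $q_n<q_{n-1}$. By Lemma~\ref{l:bigger} bad indices are isolated, and $q_n$ restricted to good indices is strictly increasing: between two consecutive good indices $n_1<n_2$, either $n_2=n_1+1$ (immediate), or $n_1+1$ is bad, in which case the bad index forces $\omega_{n_1+1}=0$ and the recurrence gives $q_{n_1+2}=d_{n_1+2}q_{n_1+1}+q_{n_1}>q_{n_1}$. So the good-index values of $q_n$ tend to infinity. At a bad index $n$, Lemma~\ref{l:estimates} controls the drop: $q_n>q_{n-2}$ in case (i); $q_n>q_{k-1}$ in case (iii), where $k<n-1$ is the largest index with $(\omega_k,d_k)\ne(1,2)$; and case (ii) gives $q_n=1$ only when the prefix is entirely $(1,2)$, which for irrational $x$ can happen at most once (otherwise the expansion would represent $\pm 1\in\mathbb{Q}$).

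The main obstacle is confirming that the lower bounds $q_{n-2}$ and $q_{k-1}$ from Lemma~\ref{l:estimates} themselves grow without bound. For case (i), $n-2$ is either good (so $q_{n-2}$ lies on the strictly increasing good-index subsequence) or bad (then one recurses, using that the chain of invocations terminates on good indices). For case (iii), the key observation is that every previous bad index $b$ has $(\omega_b,d_b)=(0,1)\ne(1,2)$, so the $k$ associated with any later bad $n>b+1$ satisfies $k\ge b$; as bad indices recur, $k\to\infty$ with $n$, forcing $q_{k-1}\to\infty$. After the at-most-one occurrence of case (ii), these observations combine to yield $q_n\to\infty$. The bound $|x-p_n/q_n|\le 1/q_n$ then gives $p_n/q_n\to x$.
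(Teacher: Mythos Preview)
Your proof is correct and follows essentially the same route as the paper: bound $|x-p_n/q_n|$ by $1/q_n$ using (\ref{q:xfrac}) and (\ref{q:pnqnminus}), then invoke Lemmas~\ref{l:qnsmall}--\ref{l:estimates} to show $q_n\to\infty$. Your case split on $\omega_n$ for the error bound is in fact a bit cleaner than the paper's chain through $1/(q_n|q_n-q_{n-1}|)$, and your claim that case~(ii) of Lemma~\ref{l:estimates} occurs at most once should be justified simply by noting that any bad index $n$ has $(\omega_n,d_n)=(0,1)$, so no later prefix can be entirely $(1,2)$ --- the irrationality of $x$ plays no role at that step.
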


\begin{proof}
For all $n \ge 1$ we have using (\ref{q:xfrac}) and (\ref{q:pnqnminus})
\begin{eqnarray*}
\big| x - \frac{p_n}{q_n} \big| &=& \Big| \frac{q_n \big( p_n + p_{n-1}\pi\big(K^n(\omega,x) \big) \big) - p_n \big( q_n + q_{n-1} \pi\big(K^n(\omega,x) \big) \big)}{ q_n \big(q_n + q_{n-1} \pi\big(K^n(\omega,x) \big) \big)} \Big|\\
&=& \Big| \frac{\pi\big(K^n(\omega,x) \big) (q_np_{n-1}-p_nq_{n-1})}{q_n \big(q_n + q_{n-1} \pi\big(K^n(\omega,x) \big) \big)} \Big|\\
& \le & \frac{1}{|q_n \big(q_n + q_{n-1} \pi\big(K^n(\omega,x) \big) \big)|} \le \frac1{q_n|q_n-q_{n-1}|} \le \frac1{q_n}.
\end{eqnarray*}
We now show that $\lim_{n \to \infty} \frac1{q_n}=0$. Let $\varepsilon >0$. By Lemma~\ref{l:bigger} there exists a subsequence $(q_{n_k})_{k \ge 0}$ such that $\lim_{k \to \infty} \frac1{q_{n_k}}=0$ and hence there exists an $N_1$ such that $\frac1{q_{N_1}} < \varepsilon$. If there is no $n > N_1$, such that $q_n < q_{n-1}$, then we are done. If there is, let $k$ be the smallest such index. If $( \omega_{N_1+1}, d_{N_1+1}) \neq (1,2)$, by Lemma~\ref{l:estimates} $q_k \ge q_{N_1}$ and then same holds for all other $n > N_1$. If $(\omega_{N_1+1}, d_{N_1+1}) = (1,2)$, then set $N=k+1$. We then have $\frac1{q_N} < \frac1{q_{k-1}} < \frac1{q_{N_1}} < \varepsilon$. Moreover, for all $n >N $ we have $q_n > q_{k-1}$, since $(\omega_k,d_k)=(0,1)$. This shows that the limit exists and is equal to 0. Hence we get
\[ x =  \cfrac{(-1)^{\omega_0}}{d_1 + \cfrac{(-1)^{\omega_1}}{d_2 + \cfrac{(-1)^{\omega_2}}{d_3+\ddots}}}\]
for each $x \in [-1,1]\backslash \mathbb Q$.
\end{proof}

\section{Invariant densities}
Recall that $T_0, T_1:[0,1] \to [0,1]$ are the Gauss and R\'enyi map respectively. To study the dynamical properties of random continued fractions we replace the map $K$ by the transformation $R: \Omega \times [0,1]\to \Omega \times [0,1]$ given by
\[ R(\omega,x) = (\sigma \omega, T_{\omega_1} x).\]
The ergodic properties of $R$ are easier to study and (\ref{q:pi-K}) gives the following simple relation between $K$ and $R$. Recall that $\omega_0=0$ if $x \ge 0$ and 1 otherwise. Then,
\[ \pi \big( R^{n-1} (\omega, T_0 x)\big) = \pi \big( K^n (\omega,x)\big) + \omega_n,\]
and hence,
\[
\Big| \pi \big( K^n (\omega,x)\big) \Big| = \omega_n + (-1)^{\omega_n} \pi \big( R^{n-1} (\omega, T_0 x)\big).
\]
Using this we can recover the digit sequence $\big( d_n(\omega,x)\big)_{n \ge 1}$ generated by $K$ as follows. For $(\omega,x) \in \Omega \times [0,1]$ define
\begin{equation}\label{q:bi}
b(\omega,x) = \left\{
\begin{array}{ll}
 k+ \omega_2, & \text{if } \omega_1 + (-1)^{\omega_1} x \in \big( \frac1{k+1}, \frac1k \big],\\
 \infty, & \text{if } \omega_1 + (-1)^{\omega_1} x =0.
\end{array}\right.
\end{equation}
Write $0\omega$ for the sequence $\omega' \in \Omega$ satisfying $\omega'_1=0$ and $\omega'_{n+1} = \omega_n$ for all $n \ge 1$. Then for $n \ge 1$,
\begin{equation}\label{q:dandb}
d_n(\omega, x) = b \big( R^{n-1} (0\omega, x)\big).
\end{equation}

\begin{remark}{\rm

The Gauss map $T_0$ is intimately related to the well known Farey map, which is defined by
\[ F x = \left\{
\begin{array}{ll}
\displaystyle \frac{x}{1-x}, & \text{if } 0 \le x \le \frac12,\\
\\
\displaystyle \frac{1-x}{x}, & \text{if } \frac12 \le x \le 1.
\end{array}\right.\]
The Gauss map can be obtained from $F$ by inducing on the first passage time to the interval $\big[ \frac12, 1 \big]$. For $x \in [0,1]$ and $n \ge 1$, set $\omega_n=1$ if $\pi \big( R^{n-1} (\omega,x) \big) < \frac12$ and $\omega_n=0$ if $\pi \big( R^{n-1} (\omega,x) \big) > \frac12$. If $\pi \big( R^{n-1} (\omega,x) \big) = \frac12$, you can choose. Then $\pi \big( R^n (\omega,x) \big) = F^nx$ for each $n \ge 1$.
}\end{remark}

\subsection{Existence}
Let $\lambda$ denote the Lebesgue measure on $[0,1]$. We are going to show that $R$ has invariant measures of type $m_p \otimes \mu_p$, where $m_p$ is the $(p,1-p)$-Bernoulli measure on $\Omega$ and $\mu_p$ is a probability measure on $[0,1]$ absolutely continuous with respect to $\lambda$. For this we use \cite{Ino12}. Recall that a function $g:[0,1]\to[0,1]$ is said to have bounded variation if
\[
\bigvee_{[0,1]}g:=\sup_{0=x_0<x_1<x_2<\cdots<x_n=1} \sum_{i=1}^n |g(x_i)-g(x_{i-1})|<\infty.
\]
The following is a very simplified, discrete version of the main theorem of \cite{Ino12}:
\begin{theorem}[Inoue, \cite{Ino12}]\label{Inoue}
Given two non-singular maps $T_0, T_1:[0,1]\to[0,1]$, let $R:\{0,1\}^{\mathbb N}\times[0,1]\to\{0,1\}^{\mathbb N}\times[0,1]$ be given by
\[
R(\omega,x)=(\sigma(\omega), T_{\omega_1}(x)).
\]
Let $ p \in [0,1]$ and set $p_0=p$ and $p_1 = 1-p$. For $ i \in \{0,1\}$ let $\{I_{i,k}\}$ be a countable partition of $[0,1]$ into intervals and use $\text{int}(I_{i,k})$ to denote the interior of these intervals. Let $g(i,x)$ be functions satisfying
\begin{equation}\label{q:g}
g(i,x)= \frac{p_i}{|T_i'(x)|}
\end{equation}
on $\bigcup_k \text{int}(I_{i,k}) $. Assume that the following conditions are satisfied:
\begin{enumerate}
 \item[(I1)] The restrictions of $T_i$ to each interval $\text{int}(I_{i,k})$ are $C^1$ and monotone.
 \item[(I2)] The weighted average expansion of $T_i$ is uniformly positive for all $x$, i.e.,
\[ \sup_{x\in[0,1]} \big( g(0,x) + g(1,x) \big) <1.\]
\item[(I3)] For each $i\in\{0,1\}$ the functions $g(i,x):[0,1]\to\mathbb R$ are of bounded variation.
\end{enumerate}
Then there exists a probability measure $\mu_p$ on $[0,1]$ absolutely continuous with respect to the Lebesgue measure $ \lambda$ with density function $h_p$ that is of bounded variation. Moreover, $\mu_p$ has the property that
\[ \mu_p (A) = p \mu_p (T_0^{-1}A) + (1-p)\mu_p (T_1^{-1}A) \]
for each Borel measurable set $A \subseteq [0,1]$.
\end{theorem}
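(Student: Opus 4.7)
The plan is to construct $\mu_p$ as $h_p\,d\lambda$ where $h_p$ is a BV fixed point of the transfer (Perron-Frobenius) operator of $R$ projected onto the $[0,1]$ coordinate. Concretely, let $y_{i,k}: T_i(\mathrm{int}\,I_{i,k}) \to \mathrm{int}\,I_{i,k}$ be the inverse branch and define
\[
(\mathcal{L}f)(x) = \sum_{i=0}^{1} \sum_{k} g(i, y_{i,k}(x))\, f(y_{i,k}(x))\, \mathbf{1}_{T_i(\mathrm{int}\,I_{i,k})}(x).
\]
A standard change of variables shows that $\int \mathcal{L}f \cdot \mathbf{1}_A\, d\lambda = p\int f\cdot \mathbf{1}_{T_0^{-1}A}\,d\lambda + (1-p)\int f\cdot \mathbf{1}_{T_1^{-1}A}\,d\lambda$, so a nonnegative fixed point $h_p$ of $\mathcal{L}$ with $\int h_p\,d\lambda = 1$ yields the random invariance identity $\mu_p(A) = p\,\mu_p(T_0^{-1}A) + (1-p)\mu_p(T_1^{-1}A)$.

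The core analytic step is a Lasota-Yorke inequality: I will show that there exist $\alpha<1$ and $C<\infty$ such that
\[
\bigvee_{[0,1]} \mathcal{L}f \;\le\; \alpha \bigvee_{[0,1]} f \;+\; C\,\|f\|_1 \quad \text{for all } f\in BV([0,1]).
\]
To obtain this, I would write $\mathcal{L}f$ as a sum over $(i,k)$ of products $g(i,\cdot)\cdot (f\circ y_{i,k})$ restricted to each image interval, estimate the variation of each product via $\bigvee(uv)\le \|u\|_\infty\bigvee v + \|v\|_\infty \bigvee u$, and add up the boundary jumps picked up when joining neighbouring image intervals. The sup-norm contribution of the $\|u\|_\infty\bigvee v$ terms will collapse, via the telescoping $\bigvee_{I_{i,k}}(f\circ y_{i,k}) = \bigvee_{T_i(I_{i,k})} f$, to $\sup_x(g(0,x)+g(1,x))\bigvee_{[0,1]} f$, and condition (I2) supplies the constant $\alpha<1$. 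The remaining terms involve $\bigvee g(i,\cdot)$ (which is finite by (I3)) and interval averages of $|f|$ bounded by $C\|f\|_1$.

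Once the LY inequality is available, iterate it: starting from $f_0 \equiv 1$, induction and the geometric series in $\alpha$ give $\bigvee \mathcal{L}^n f_0 \le \alpha^n \bigvee f_0 + C(1-\alpha)^{-1}$, uniformly in $n$. The Cesaro averages $h_N := N^{-1}\sum_{n=0}^{N-1}\mathcal{L}^n f_0$ are nonnegative probability densities with uniformly bounded variation, so by Helly's selection theorem a subsequence converges in $L^1$ to a BV density $h_p$. Lower semicontinuity of the variation under $L^1$ limits, continuity of $\mathcal{L}:L^1\to L^1$ (it has operator norm $1$ as a positive Markov operator), and the standard telescoping $\mathcal{L}h_N - h_N = N^{-1}(\mathcal{L}^N f_0 - f_0)\to 0$ in $L^1$ together yield $\mathcal{L}h_p = h_p$, completing the proof.

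The main obstacle is the LY inequality in the countably-branched setting: one must control the infinitely many boundary jumps arising from the partition $\{I_{i,k}\}$, and in our intended application $T_1$ has an indifferent fixed point so $g(1,\cdot)$ is bounded but only barely (approaching the supremum at the fixed point). It is precisely the \emph{global} BV hypothesis on $g(i,\cdot)$ in (I3), rather than just BV on each branch, that lets the summation over $k$ collapse into the single quantity $\bigvee_{[0,1]} g(i,\cdot)$, while (I2) is what forces the contraction factor $\alpha$ to be strictly less than one despite the indifferent fixed point.
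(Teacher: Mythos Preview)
The paper does not give its own proof of this theorem: it is quoted as a simplified special case of the main result of Inoue \cite{Ino12}, and the only comment the authors make about the argument is that ``Theorem~\ref{Inoue} is proved in \cite{Ino12} by showing that this operator has a fixed point in the space of functions of bounded variation'' and that under (I1)--(I3) the operator $\mathcal{L}_p$ is quasi-compact and constrictive. Your outline --- define the averaged transfer operator, prove a Lasota--Yorke inequality with contraction factor $\alpha=\sup_x(g(0,x)+g(1,x))<1$, then extract a BV fixed point from Ces\`aro averages via Helly --- is exactly this strategy, so in spirit your proposal matches what the paper invokes.

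That said, the one place where your sketch is genuinely thin is the treatment of the boundary terms in the Lasota--Yorke estimate. With countably many branches the usual ``add up the jumps at partition endpoints'' step produces an infinite sum, and it is not automatic that this is controlled by $C\|f\|_1$; in the finite-branch Lasota--Yorke proof the constant $C$ depends on the minimum branch length, which here is zero. What actually saves the day (and is the content of Inoue's argument) is that the endpoint contributions can be absorbed into $\bigvee_{[0,1]} g(i,\cdot)$ \emph{globally} because (I3) gives bounded variation of $g(i,\cdot)$ on the whole of $[0,1]$, not just branchwise --- you note this in your final paragraph, but the mechanism by which the infinitely many boundary jumps telescope into $\bigvee g$ rather than into a divergent sum of $\|f\|_\infty$ terms deserves to be written out explicitly if you are reproving the result rather than citing it. Apart from this, your plan is correct and complete.
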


We have not included condition $(A2)$ of Inoue since this automatically holds when one only has a finite number of maps $T_i$. We apply Theorem~\ref{Inoue} to our setting. 
\begin{prop}\label{p:conditions}
Suppose $p \in (0,1)$. Then the maps $T_0$ and $T_1$ satisfy the conditions (I1), (I2) and (I3) of Theorem \ref{Inoue}.
\end{prop}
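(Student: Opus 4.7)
The plan is to verify the three conditions (I1), (I2), (I3) directly from the defining formulas for $T_0$ and $T_1$, with almost all of the content being a single elementary optimisation for condition (I2).

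First, I would identify the natural branch partitions. For $T_0$, set $I_{0,k}=\bigl(\tfrac{1}{k+1},\tfrac{1}{k}\bigr]$ for $k\ge 1$; on the interior, $T_0 x = \tfrac{1}{x}-k$. For $T_1$, the relation $\tfrac{1}{1-x}\in(k,k+1)$ is equivalent to $x\in\bigl(1-\tfrac{1}{k},1-\tfrac{1}{k+1}\bigr)$, so I take $I_{1,k}=\bigl[1-\tfrac{1}{k},1-\tfrac{1}{k+1}\bigr)$ for $k\ge 1$; on the interior, $T_1 x = \tfrac{1}{1-x}-k$. Condition (I1) is then immediate: on each open branch interval the maps are $C^1$, with $T_0$ strictly decreasing and $T_1$ strictly increasing.

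For (I2), the derivatives are $|T_0'(x)|=1/x^2$ and $|T_1'(x)|=1/(1-x)^2$, both of which are constant formulas across all branches. Hence on $\bigcup_k \operatorname{int}(I_{0,k})$ we have $g(0,x)=px^2$ and on $\bigcup_k \operatorname{int}(I_{1,k})$ we have $g(1,x)=(1-p)(1-x)^2$. Extending by these polynomials to all of $[0,1]$, the quantity to bound is
\[
f(x):=px^2+(1-p)(1-x)^2.
\]
Differentiating gives $f'(x)=2x-2(1-p)$, which vanishes at $x=1-p\in(0,1)$; since $f''>0$, this is a minimum, so the supremum over $[0,1]$ is attained at an endpoint, yielding $\sup_{x\in[0,1]}f(x)=\max(f(0),f(1))=\max(1-p,p)$. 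For $p\in(0,1)$ this is strictly less than $1$, giving (I2).

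Condition (I3) is then essentially free: $g(0,\cdot)$ and $g(1,\cdot)$, extended by the same polynomial formulas, are $C^\infty$ on the compact interval $[0,1]$, hence of bounded variation. (Any alternative values assigned at the countably many partition endpoints do not affect bounded variation of the extended function.)

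There is no real obstacle here; the only thing to be careful about is correctly identifying the partition $\{I_{1,k}\}$ for the Rényi map (so that the polynomial $g(1,x)=(1-p)(1-x)^2$ comes out right) and noting that the supremum in (I2) is achieved at the endpoints rather than the interior critical point, which is what makes the bound $\max(p,1-p)<1$ work for every $p\in(0,1)$.
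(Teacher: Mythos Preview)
Your proof is correct and follows essentially the same route as the paper: the same branch partitions (your $I_{1,k}=[1-\tfrac1k,1-\tfrac1{k+1})$ is the paper's $[\tfrac{k-1}{k},\tfrac{k}{k+1})$), the same identification $g(0,x)=px^2$, $g(1,x)=(1-p)(1-x)^2$, and the same conclusion $\sup f=\max\{p,1-p\}$ for (I2). The only cosmetic difference is that for (I3) the paper invokes monotonicity of $g(i,\cdot)$ to write down the variations explicitly, whereas you appeal to smoothness; both are immediate.
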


\begin{proof}
For $k \ge 1$, let $I_{0,k} = \big( \frac1{k+1}, \frac1{k} \big]$ and $I_{1,k} = \big[ \frac{k-1}{k}, \frac{k}{k+1} \big)$. Consider the following partitions of $[0,1]$:
\begin{equation}\label{q:partition}
\mathcal I_0 = \{I_{0,k}\}_{k \ge 1} \quad \text{ and } \mathcal I_1 = \{I_{1,k}\}_{k \ge 1}.
\end{equation}
Then $T_0$ and $T_1$ are both $C^1$ and monotone on the interiors of the intervals of their respective partitions, so condition (I1) is satisfied. Note that the functions $g(i,\cdot)$ from (\ref{q:g}) become
\[g(0,x)= px^2 \quad \text{ and } \quad g(1,x) = (1-p)(1-x)^2.\]
almost everywhere. We see that
\[ \sup_{x \in [0,1]} \big( g(0,x) + g(1,x) \big) = \sup_{x \in [0,1]} \big( x^2 -2(1-p)x + (1-p) \big)= \max \{ 1-p, p \}. \]
So condition (I2) is satisfied for all $p \in (0,1)$. Since both $g(0, \cdot)$ and $g(1, \cdot)$ are monotone functions on the interval $[0,1]$, we have
\[ \bigvee_{[0,1]}g (0, \cdot) = p \quad \text{ and } \bigvee_{[0,1]}g (1, \cdot) = 1-p.\]
This gives (I3), and the proof
\end{proof}

The conclusions of Theorem~\ref{Inoue} yield the following result, the proof of which is standard, but included for the convenience of the reader.
\begin{theorem}\label{t:invm}
For any choice of parameter $0< p < 1$, there is an absolutely continuous probability measure $\mu_p \ll \lambda$ such that the product measure $m_p\otimes \mu_p$ is invariant for $R$. The probability density function $h_p$ of the measure $\mu_p$ is of bounded variation.
\end{theorem}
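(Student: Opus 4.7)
The plan is to invoke Theorem \ref{Inoue} (via Proposition \ref{p:conditions}) to produce $\mu_p$, and then verify invariance of $m_p\otimes\mu_p$ under $R$ on a $\pi$-system generating the product $\sigma$-algebra.

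First, by Proposition \ref{p:conditions} the hypotheses (I1), (I2), (I3) of Theorem \ref{Inoue} hold for our $T_0, T_1$ whenever $p\in(0,1)$. Applying Inoue's theorem directly yields a probability measure $\mu_p\ll\lambda$ on $[0,1]$ with density $h_p$ of bounded variation, satisfying the key transfer identity
\[
\mu_p(A)=p\,\mu_p(T_0^{-1}A)+(1-p)\,\mu_p(T_1^{-1}A)
\]
for every Borel set $A\subseteq[0,1]$. This gives us the invariant density; the remaining task is purely formal: lifting this to invariance of the product measure on the skew product.

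Next, I would verify invariance on rectangles. Let $A\subseteq[0,1]$ be Borel and let $C\subseteq\Omega$ be a cylinder set (or any Borel set). Since $R(\omega,x)=(\sigma\omega, T_{\omega_1}x)$, we can split according to the value of $\omega_1$:
\[
R^{-1}(C\times A)=\bigl(\{\omega:\omega_1=0,\ \sigma\omega\in C\}\times T_0^{-1}A\bigr)\cup\bigl(\{\omega:\omega_1=1,\ \sigma\omega\in C\}\times T_1^{-1}A\bigr),
\]
with the union disjoint. The Bernoulli measure $m_p$ has the property that $m_p(\{\omega:\omega_1=i,\ \sigma\omega\in C\})=p_i\,m_p(C)$, where $p_0=p$ and $p_1=1-p$. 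Therefore
\[
(m_p\otimes\mu_p)\bigl(R^{-1}(C\times A)\bigr)=m_p(C)\bigl[p\,\mu_p(T_0^{-1}A)+(1-p)\,\mu_p(T_1^{-1}A)\bigr]=m_p(C)\,\mu_p(A),
\]
where the last equality is exactly Inoue's identity. The right-hand side equals $(m_p\otimes\mu_p)(C\times A)$, so invariance holds on all measurable rectangles.

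Finally, I would extend this to the full product $\sigma$-algebra on $\Omega\times[0,1]$. The collection of measurable rectangles forms a $\pi$-system generating this $\sigma$-algebra, and the class of Borel sets $B$ for which $(m_p\otimes\mu_p)(R^{-1}B)=(m_p\otimes\mu_p)(B)$ is a $\lambda$-system (it contains $\Omega\times[0,1]$ and is closed under proper differences and countable increasing unions by standard measure-theoretic continuity). Dynkin's $\pi$-$\lambda$ theorem then yields invariance on all of the product $\sigma$-algebra, completing the proof. There is no real obstacle here: all the analytic work (quasi-compactness of the transfer operator, existence and bounded variation of the density) is packaged inside Theorem \ref{Inoue}, and the present theorem amounts to the routine observation that Inoue's disintegrated invariance identity is exactly the condition for the product with a Bernoulli measure to be $R$-invariant.
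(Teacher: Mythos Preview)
Your proof is correct and follows essentially the same approach as the paper: invoke Inoue's theorem via Proposition~\ref{p:conditions} to obtain $\mu_p$ and the transfer identity, then check invariance of $m_p\otimes\mu_p$ on rectangles by splitting $R^{-1}(C\times A)$ according to $\omega_1$. The paper verifies this on cylinder$\times$interval sets and leaves the extension implicit, whereas you spell out the $\pi$-$\lambda$ argument; otherwise the arguments coincide.
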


\begin{proof}
Theorem~\ref{Inoue} gives an absolutely continuous measure $\mu_p$ with the property that for each Borel set $A \subset [0,1]$, we have
\begin{equation}\label{q:mup}
\mu_p (A) = p \mu_p (T_0^{-1}A) + (1-p)\mu_p (T_1^{-1}A).
\end{equation}
Take a cylinder $[j_1 \cdots j_n] \in \{ 0,1\}^{\mathbb N}$ and an interval $(a,b) \subset [0,1]$. Then
\[ R^{-1} \big( [j_1 \cdots j_n] \times (a,b) \big)
= [0j_1 \cdots j_n] \times T_0^{-1}\big((a,b) \big) \cup [1j_1 \cdots j_n] \times T_1^{-1}\big((a,b) \big).\]
Hence,
\begin{align*}
(m_p \otimes \mu_p)\Big( & R^{-1} \big( [j_1 \cdots j_n] \times (a,b) \big) \Big)\\
&= p \, m_p ([j_1 \cdots j_n]) \mu_p \Big(T_0^{-1}\big( (a,b) \big) \Big) + (1-p) \, m_p ([j_1 \cdots j_n]) \mu_p \Big(T_1^{-1}\big( (a,b) \big) \Big)\\
&= m_p ([j_1 \cdots j_n]) \mu_p \big((a,b) \big) =(m_p \otimes \mu_p) \big( [j_1 \cdots j_n] \times (a,b) \big).
\end{align*}
This gives the first part of the result. The density $h_p$ is given by Theorem~\ref{Inoue}.
\end{proof}

If $p=1$, then $R$ reduces to the Gauss map and $h_1(x) = \frac1{\log 2}\frac1{x+1}$. If, on the other hand, we take $p=0$, then $R$ reduces to the map $T_1$, which has no absolutely continuous invariant probability measure, but does have an infinite and $\sigma$-finite absolutely continuous invariant measure with density $h_0 (x)=\frac1{x}$. This was proved by R\'enyi in \cite{Ren57}. The fact that $R$ is expanding on average causes that $h_p$ is a probability density for all $0<p<1$. It would be interesting to analyse the behaviour of $h_p$ as $p \to 0$.

\subsection{Properties of the invariant measure}
In this section we list some of the properties of the invariant measure $m_p \otimes \mu_p$. In \cite{Ino12} Inoue proved the existence of the density $h_p$ by analysing a random version of the Perron-Frobenius operator. In our case, the corresponding operator $\mathcal L_p: L^1(\lambda) \to L^1(\lambda)$ is given by
\begin{equation}\label{q:pf}
(\mathcal L_p f)(x) = \sum_{k \ge 1} \Big[ \frac{p}{(k+x)^2} f \Big( \frac1{k+x} \Big) + \frac{1-p}{(
k+x)^2} f \Big( 1-\frac1{k+x} \Big) \Big].
\end{equation} 
Theorem \ref{Inoue} is proved in \cite{Ino12} by showing that this operator has a fixed point in the space of functions of bounded variation, which is our function $h_p$. Recall that functions of bounded variation can be modified on a countable number of points to obtain a lower semi-continuous function. From now on we assume that $h_p$ is lower semi-continuous. Under the conditions of Theorem~\ref{Inoue} the operator $\mathcal L_p$ is quasi-compact and constrictive. In this section we use these results to derive some dynamical properties of $R$. We first prove that $R$ satisfies a strong Random Covering Property.

\begin{prop}\label{p:rcp}
Let $I \subseteq [0,1]$ be a non-trivial interval. Then for every $\omega \in \Omega$, there is an $n \ge 1$ such that
\[ \big( T_{\omega_n} \circ \cdots \circ T_{\omega_1}\big) I = [0,1).\]
\end{prop}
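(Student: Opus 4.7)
The plan is to argue by contradiction. Set $J_n := (T_{\omega_n}\circ\cdots\circ T_{\omega_1})(I)$ and suppose $J_n \ne [0,1)$ for every $n \ge 1$. The engine of the proof is the Markov property of $T_0, T_1$: each branch $I_{i,k}$ is mapped bijectively onto $[0,1)$, so whenever $J_n$ contains a full branch of $T_{\omega_{n+1}}$, one has $J_{n+1} = [0,1)$. In particular, any right-neighborhood $[0,\epsilon]$ of $0$ contains $I_{0,k}$ for all $k \ge \lceil 1/\epsilon\rceil$, and symmetrically any left-neighborhood $[1-\epsilon, 1]$ of $1$ contains $I_{1,k}$ for all $k \ge \lceil 1/\epsilon\rceil$.

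The first step is to show that once some $J_n \supseteq [0, \epsilon]$, only finitely many further iterations are needed, no matter the remainder of $\omega$. If $\omega_m = 0$ for some $m > n$, we are done immediately; otherwise $\omega_m = 1$ for all $m > n$, and since $T_1|_{[0,1/2)}(x) = x/(1-x)$, the right endpoint evolves as $\epsilon \mapsto \epsilon/(1-\epsilon)$. This iteration on $(0, 1/2]$ is strictly increasing with no fixed point, so the endpoint eventually exceeds $1/2$; the image then contains $I_{1,1}$ and one more $T_1$ gives $[0,1)$. A symmetric argument handles left-neighborhoods of $1$.

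It therefore suffices to show that at some stage $J_n$ contains a branch boundary of $T_{\omega_{n+1}}$ in its interior, because such a boundary is sent to $0$ and, by monotonicity of each $T_i$ on each branch, $J_{n+1}$ will then contain a right-neighborhood $[0, \delta)$ of $0$. Assume for contradiction this never happens. Then each $J_n = [\alpha_n, \beta_n]$ is an interval lying in a single branch $I_{\omega_{n+1}, k_n}$, and
\[ |J_{n+1}| = \int_{J_n} |T_{\omega_{n+1}}'(x)|\,dx. \]
Since $|T_i'| \ge 1$ with equality only on a Lebesgue-null set, $|J_n|$ strictly increases and is bounded by $1$. Because $|T_i'| \ge k^2 \ge 4$ on branches with $k \ge 2$, we must have $k_n = 1$ for all large $n$, placing $J_n \subset (1/2, 1]$ when $\omega_{n+1} = 0$ and $J_n \subset [0, 1/2)$ when $\omega_{n+1} = 1$.

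The main obstacle is that on $I_{0,1}$ and $I_{1,1}$ the derivative tends to $1$ at the indifferent endpoints ($x = 1$ and $x = 0$ respectively), so one-step expansion is not uniformly bounded above $1$. A case analysis on the tail of $\omega$ resolves this. If eventually $\omega_m = 0$, the single-branch constraint $T_0(J_m) \subset I_{0,1}$ forces $\beta_m < 2/3$, and then $|T_0'| \ge 9/4$ yields geometric growth of $|J_m|$, contradicting $|J_m| \le 1/2$. If eventually $\omega_m = 1$, the constraint forces $\beta_m < 1/3$, and the strictly increasing recurrence $\beta_{m+1} = \beta_m/(1 - \beta_m)$ must converge to a fixed point in $[0, 1/3]$; the only fixed point is $0$, contradicting $\beta_m \ge \beta_0 > 0$. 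If $\omega$ has both symbols infinitely often, the transitions $(0,1)$ or $(1,0)$ occur infinitely often; a direct two-step computation for $(0,1)$ yields
\[ |J_{n+2}| \;=\; \frac{|J_n|}{(2\alpha_n-1)(2\beta_n-1)} \;=\; \frac{|J_n|}{(\alpha_n+\beta_n-1)^2 - |J_n|^2} \;\ge\; \frac{|J_n|}{1 - |I|^2}, \]
a ratio $\ge 1/(1-|I|^2) > 1$ uniformly, so $|J_n|$ grows geometrically, contradicting $|J_n| \le 1$. In every case the required branch-crossing is forced, and combined with the first step we obtain $J_N = [0,1)$ for some $N$, completing the argument.
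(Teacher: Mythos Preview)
Your argument is correct and follows the same two-stage strategy as the paper: first establish that the iterated image must eventually straddle a partition endpoint, then deduce $[0,1)$ in finitely many further steps.

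The paper's execution is terser in both stages, and in fact glosses over the first: it simply asserts that strict length increase forces a crossing, without addressing the indifferent fixed points. Your case analysis on the tail of $\omega$ fills this in, but a one-line uniform bound also works and replaces all three cases: since $|J_n|\ge |I|$ throughout, on $I_{0,1}=(1/2,1]$ one has $\alpha_n\le \beta_n-|I|\le 1-|I|$, giving expansion factor $|J_{n+1}|/|J_n|=1/(\alpha_n\beta_n)\ge 1/(1-|I|)$, and symmetrically on $I_{1,1}$; every other branch has expansion at least $4$. Geometric growth by a fixed factor $>1$ then contradicts $|J_n|\le 1$ directly. For the second stage the paper observes that crossing a partition endpoint $p$ produces both an interval $[0,c)$ \emph{and} an interval $(1-c',1)$ in $J_{m+1}$, since on the two adjacent branches $T_{\omega_{m+1}}$ tends to $0$ on one side of $p$ and to $1$ on the other. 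As $[0,c)$ contains full branches of $T_0$ and $(1-c',1)$ contains full branches of $T_1$, the next map $T_{\omega_{m+2}}$, whichever it is, already yields $[0,1)$. So your separate treatment of the $\omega$-tail after reaching a one-sided neighbourhood $[0,\epsilon]$ can be dispensed with.
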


\begin{proof}
Recall the definition of the partitions $\mathcal I_0$ and $\mathcal I_1$ in the proof of Proposition~\ref{p:conditions}. If a non-trivial interval $J$ is contained in one of the intervals in $\mathcal I_i$, then $T_i J$ is again an interval and $\lambda (T_i J) > \lambda(J)$. Hence, there is an $m\ge 1$, such that $\big( T_{\omega_m} \circ \cdots \circ T_{\omega_1}\big) I$ contains an endpoint of one of the intervals in $\mathcal I_{\omega_{m+1}}$. This means that $\big(T_{\omega_{m+1}} \circ T_{\omega_m} \circ \cdots \circ T_{\omega_1}\big) I$ contains an interval of the form $[0,c)$ and an interval of the form $(1-c,1)$ for some $c>0$. Thus $\big(T_{\omega_{m+2}} \circ \cdots \circ T_{\omega_1}\big) I = [0,1)$.
\end{proof}

It now follows that the measure $\mu_p$ is in fact equivalent to $\lambda$. The proof below is essentially the one from \cite{ANV14}.
\begin{prop}
Let $h_p$ be the probability density function from Theorem~\ref{t:invm}. Then $h_p(x)>0$ for all $x \in [0,1)$.
\end{prop}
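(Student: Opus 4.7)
The plan is to show that the open set $V := \{x \in [0,1] : h_p(x) > 0\}$ equals $[0,1)$ by propagating positivity forward under $T_0$ and $T_1$ and then invoking the Random Covering Property (Proposition~\ref{p:rcp}).

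First I would observe that $V$ is open (by lower semi-continuity of $h_p$) and non-empty (because $\int_0^1 h_p\, d\lambda = 1$), so $V$ contains a non-trivial open interval $J_0$ on which $h_p \geq c_0 > 0$ pointwise.

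Next I would prove a propagation lemma: if $J$ is an open subinterval of a single partition element $I_{i,k}$ with $h_p \geq c$ pointwise on $J$, then $h_p \geq c'$ pointwise on the open interval $T_i(J)$, for some $c' = c'(c,J,i,k) > 0$. Retaining only the single branch over $I_{i,k}$ in the Perron--Frobenius identity $\mathcal{L}_p h_p = h_p$ gives, for $\lambda$-a.e.\ $y \in T_i(J)$,
\[
h_p(y) \;\geq\; \frac{p_i}{|T_i'(T_{i,k}^{-1}(y))|}\, h_p\bigl(T_{i,k}^{-1}(y)\bigr) \;\geq\; \frac{p_i\, c}{\sup_{x \in J}|T_i'(x)|},
\]
and the supremum is finite since $J$ sits inside a single $I_{i,k}$, away from the pole of $T_i'$ (at $0$ for $i=0$ and at $1$ for $i=1$). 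Using that $h_p$ is LSC (taking the canonical representative $h_p(y) = \liminf_{z \to y} h_p(z)$), this $\lambda$-a.e.\ uniform lower bound on the open interval $T_i(J)$ upgrades to a pointwise lower bound on all of $T_i(J)$.

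Finally, for any $y \in [0,1)$ I would fix some $\omega \in \Omega$ and use Proposition~\ref{p:rcp} to obtain $n \geq 1$ with $(T_{\omega_n} \circ \cdots \circ T_{\omega_1})(J_0) = [0,1)$. Iterating the propagation lemma along this orbit --- at each step splitting the current interval across the partition elements of $\mathcal{I}_{\omega_{k+1}}$ and propagating each piece separately --- produces an open neighbourhood of $y$ on which $h_p$ is bounded below by a positive constant, and hence $h_p(y) > 0$. Since $y$ was arbitrary, $h_p > 0$ everywhere on $[0,1)$.

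The main obstacle is the pointwise-versus-$\lambda$-a.e.\ distinction: the identity $\mathcal{L}_p h_p = h_p$ holds only $\lambda$-a.e., so one has to work with the canonical LSC representative of $h_p$ to convert an a.e.\ uniform positive lower bound on an open interval into a genuine pointwise lower bound. A secondary wrinkle is the countably many partition boundary points encountered when iterating, but these are again absorbed by lower semi-continuity.
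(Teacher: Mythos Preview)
Your proposal is correct and follows essentially the same strategy as the paper's proof: start from an interval on which $h_p$ is bounded below by a positive constant, push that positivity through the transfer operator, and use the Random Covering Property (Proposition~\ref{p:rcp}) to reach every point of $[0,1)$.

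The only real difference is packaging. The paper does the iteration in one stroke: it writes
\[
h_p(x) \;=\; \mathcal L_p^n h_p(x) \;\ge\; \alpha\, \mathcal L_p^n 1_I(x)
\;\ge\; \sum_{y \in (T_{\bar\omega_n}\circ\cdots\circ T_{\bar\omega_1})^{-1}\{x\}}
\frac{p_{\bar\omega_1}\cdots p_{\bar\omega_n}}{|(T_{\bar\omega_n}\circ\cdots\circ T_{\bar\omega_1})'y|}\,1_I(y),
\]
and then invokes the covering property for the fixed sequence $\bar\omega$ to guarantee a preimage $y\in I$ for every $x\in[0,1)$. Working with preimages in this way sidesteps your ``secondary wrinkle'' entirely: there is no need to split the interval at partition boundaries, because the branch structure is already built into the expansion of $\mathcal L_p^n$. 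Your step-by-step forward propagation is equivalent but slightly more laborious for exactly this reason. Your care about the a.e.\ versus pointwise issue via the LSC representative is appropriate and matches the paper's standing assumption that $h_p$ is taken lower semi-continuous.
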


\begin{proof}
We know that $h_p$ is a function of bounded variation satisfying $h_p \ge 0$, $\int_{[0,1]} h_p d\mu_p =1$ and $\mathcal L_p h_p=h_p$. Therefore there exists a non-trivial interval $I \subseteq[0,1]$ and an $\alpha >0$, such that $h_p \ge \alpha1_{I}$. Fix a sequence $\bar \omega = (\bar \omega_1, \bar \omega_2, \ldots) \in \Omega$. Then for any $n \ge 1$,
\begin{eqnarray*}
h_p(x) &=& \mathcal L^n_p h_p(x) \, \ge \, \alpha \mathcal L^n_p 1_I (x)\\
& = & \sum_{(\omega_1, \ldots, \omega_n) \in \Omega^n} \sum_{y \in (T_{\omega_n}\circ \cdots \circ T_{\omega_1})^{-1}\{x\}} \frac{p_{\omega_1}\cdots p_{\omega_n}}{|(T_{\omega_n}\circ \cdots \circ T_{\omega_1})'y|} 1_I (y)\\
& \ge & \sum_{y \in (T_{\bar \omega_n} \circ \cdots \circ T_{\bar \omega_1})^{-1}\{x\}} \frac{p_{\bar \omega_1}\cdots p_{\bar \omega_n}}{|(T_{\bar \omega_n} \circ \cdots \circ T_{\bar \omega_1})'y|} 1_I(y).
\end{eqnarray*}
By Proposition~\ref{p:rcp} there is an $n \ge 1$ such that for all $x \in [0,1)$,
\[ (T_{\bar \omega_n} \circ \cdots \circ T_{\bar \omega_n})^{-1}\{x\} \cap I \neq \emptyset.\]
Hence, for all $0 < p < 1$ and all $x \in [0,1)$, $h_p(x) >0$.
\end{proof}

This proposition leads to the following result, the proof of which uses a standard technique that can be found for example in \cite{BG97}.
\begin{prop}\label{p:boundedaway}
The probability measure $h_p$ is bounded from above and away from 0.
\end{prop}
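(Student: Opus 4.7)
The boundedness from above will follow immediately from bounded variation: since $\int_0^1 h_p\,d\lambda = 1$, there is $x_0 \in [0,1]$ with $h_p(x_0) \le 1$, and hence
\[
h_p(x) \le h_p(x_0) + \bigvee_{[0,1]} h_p \le 1 + \bigvee_{[0,1]} h_p < \infty
\]
for every $x$, by Theorem~\ref{t:invm}. For the lower bound, the strategy will be to exploit lower semi-continuity of $h_p$, its pointwise positivity on $[0,1)$ (the preceding proposition), and compactness of $[0,1]$: a strictly positive lower semi-continuous function on a compact set attains a strictly positive infimum, so it suffices to verify $h_p(1) > 0$.

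To prove $h_p(1) > 0$, I would invoke the transfer operator identity $\mathcal{L}_p h_p = h_p$, which holds $\lambda$-almost everywhere. Keeping only the $k=1$ Gauss term in \eqref{q:pf} yields, for a.e.\ $x \in [0,1]$,
\[
h_p(x) \;\ge\; \frac{p}{(1+x)^2}\, h_p\!\left(\frac{1}{1+x}\right).
\]
I would then pass to essential left-hand limits as $x \to 1^-$. Because $h_p$ has bounded variation, its one-sided limits exist at every point and are intrinsic to its $L^1$ equivalence class. The left-hand side tends to $h_p(1^-)$; since $\tfrac{1}{1+x} \to \tfrac{1}{2}^+$, the right-hand side tends to $\tfrac{p}{4}\, h_p(\tfrac{1}{2}^+)$. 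Using that for the LSC representative one has $h_p(1) = h_p(1^-)$ and $h_p(\tfrac12) \le h_p(\tfrac12^+)$, this gives
\[
h_p(1) \;\ge\; \frac{p}{4}\, h_p\!\left(\tfrac12\right) \;>\; 0,
\]
the final strict positivity being the preceding proposition applied at $\tfrac12 \in [0,1)$.

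The main obstacle will be bridging the gap between the a.e.\ identity $\mathcal{L}_p h_p = h_p$ and the pointwise LSC representative. Bounded variation is exactly what is needed: one-sided limits exist at every point and are determined by the equivalence class, so the a.e.\ inequality above promotes to a pointwise inequality between boundary values. Once $h_p(1) > 0$ is in hand, compactness and lower semi-continuity (the infimum of an LSC function on a compact interval is attained, hence, by pointwise positivity, strictly positive) complete the proof.
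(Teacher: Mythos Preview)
Your proposal is correct and follows essentially the same approach as the paper: reduce to showing $h_p(1)>0$ via lower semi-continuity and compactness, then bound $h_p$ near $1$ from below using a single Gauss branch of the transfer operator, landing on an interior point where positivity is already known. The paper implements this last step by applying the invariance identity $\mu_p(A)=p\,\mu_p(T_0^{-1}A)+(1-p)\,\mu_p(T_1^{-1}A)$ to shrinking intervals $(1-\varepsilon,1)$ and computing the density limit, obtaining $h_p(1^-)\ge \frac{p}{(k+1)^2}\,h_p\bigl(\tfrac{1}{k+1}\bigr)$; your direct use of the pointwise inequality $h_p(x)\ge \frac{p}{(1+x)^2}h_p\bigl(\tfrac{1}{1+x}\bigr)$ followed by one-sided BV limits is the same argument in slightly different packaging (your version is the case $k=1$).
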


\begin{proof}
The fact that $h_p$ is bounded from above follows since $h_p$ is of bounded variation. In the previous proposition, we have established that $h_p(x)>0$ for all $x \in [0,1)$. Since $h_p$ is lower semi-continuous, it takes its minimum on [0,1]. Therefore, it is enough to show that $h_p(1)>0$. Let $\varepsilon >0$ be small and let $k \ge 1$. We consider part of the inverse image of the interval $(1-\varepsilon, 1)$ under $T_0$ (we could just as well use $T_1$). Note that
\[  \Big( \frac1{k+1}, \frac1{k+1-\varepsilon} \Big) \subseteq T_0^{-1} ( 1-\varepsilon, 1)\]
and that
\[ \lambda \Big(\Big( \frac1{k+1}, \frac1{k+1-\varepsilon} \Big) \Big)= \frac{\varepsilon}{(k+1)(k+1-\varepsilon)}.\]
Hence,
\[ k^2 \lambda \Big(\Big( \frac1{k+1}, \frac1{k+1-\varepsilon} \Big) \Big) < \lambda \big( (1-\varepsilon,1) \big) < (k+1)^2 \lambda \Big(\Big( \frac1{k+1}, \frac1{k+1-\varepsilon} \Big) \Big).\]
It then follows that
\begin{eqnarray*}
\lim_{x \uparrow 1} h_p(x) &=& \lim_{\varepsilon \to 0} \frac1{\lambda \big( (1-\varepsilon,1)\big)} \int_{1-\varepsilon}^1 h_p(x) dx \, = \, \lim_{\varepsilon \to 0} \frac{\mu_p\big((1-\varepsilon, 1)\big)}{\lambda \big( (1-\varepsilon,1)\big)}\\
& \ge & \lim_{\varepsilon \to 0} \frac{p \mu_p \big( T_0^{-1} (1-\varepsilon, 1) \big) + (1-p)\mu_p \big( T_1^{-1} (1-\varepsilon, 1) \big)}{(k+1)^2 \lambda \big(\big( \frac1{k+1}, \frac1{k+1-\varepsilon} \big) \big)}\\
& \ge & \lim_{\varepsilon \to 0} \frac{p \mu_p \big(\big( \frac1{k+1}, \frac1{k+1-\varepsilon} \big) \big)}{(k+1)^2 \lambda \big(\big( \frac1{k+1}, \frac1{k+1-\varepsilon} \big) \big)} \, = \, \frac{p}{(k+1)^2} h_p\Big(\frac1{k+1}\Big) >0.
\end{eqnarray*}
The fact that $h_p$ is bounded from above follows since $h_p$ is a function of bounded variation on a closed and bounded interval.
\end{proof}

The fact that the operator $\mathcal L_p$ is quasi-compact on the set of functions of bounded variation allows us to obtain a number of consequences from the Ionescu-Tulcea and Marinescu Theorem, as is done in many similar situations. The reader is referred to \cite{Bal00} for example for an outline of this approach for deterministic maps. The spectral decomposition (which is already given in \cite{Ino12}) together with Proposition~\ref{p:boundedaway} gives that 1 is a simple eigenvalue for $\mathcal L_p$, that there are no other eigenvalues on the unit circle and that $\mathcal L_p^n f \to \int f d\lambda h_p$ in $L^1(\lambda)$. This means that the system $R$ satisfies the conditions of \cite{ANV14}. (Note that $R$ is not contained in the class of random Lasota-Yorke systems discussed in Example 2.1 of \cite{ANV14}, since both $T_0$ and $T_1$ have infinitely many branches.) From \cite{ANV14} we then immediately get exponential decay of correlations:

\begin{prop}[see \cite{ANV14} Proposition 3.1]\label{p:edc}
There exist constants $C \ge 0$ and $\rho <1$, such that for all functions $f$ of bounded variation and all $g \in L^{\infty}(\lambda)$,
\[ \Big| \int_{[0,1]} \mathcal L^n_pf \cdot g d\mu_p - \int_{[0,1]} f d\mu_p \int_{[0,1]} gd\mu_p\Big| \le C \rho^n \|f \|_{BV} \| g \|_{\infty}.\]
\end{prop}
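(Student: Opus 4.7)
The plan is to invoke Proposition~3.1 of \cite{ANV14} once all of its hypotheses are checked in our setting. Most of the work is already done in the paragraph preceding the statement: quasi-compactness of $\mathcal{L}_p$ on $BV$ is provided by \cite{Ino12}, while simplicity of the leading eigenvalue $1$ and the absence of other peripheral spectrum rest on Proposition~\ref{p:boundedaway}, which ensures $h_p$ is strictly positive and bounded. Together these give a genuine spectral gap for $\mathcal{L}_p$ on $BV$.

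The first step is to translate this spectral gap into a quantitative convergence rate. Since $\mathcal{L}_p$ is the transfer operator with respect to Lebesgue measure, $\int \mathcal{L}_p f \, d\lambda = \int f \, d\lambda$, and the rank-one spectral projection onto the top eigenspace must take the form $P f = h_p \int f \, d\lambda$. The decomposition $\mathcal{L}_p = P + N$ with $PN = NP = 0$ and spectral radius of $N$ strictly less than $1$ yields constants $C>0$ and $\rho \in (0,1)$ with
\[
\Bigl\| \mathcal{L}_p^n f - h_p \int f \, d\lambda \Bigr\|_{BV} \le C \rho^n \|f\|_{BV}.
\]

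The second step is to pair this bound against $g \in L^\infty(\lambda)$ using $d\mu_p = h_p\, d\lambda$. Because $h_p$ is bounded above by Proposition~\ref{p:boundedaway} and the $BV$-norm dominates the $L^1(\lambda)$-norm, it follows that
\[
\Bigl| \int \bigl(\mathcal{L}_p^n f - Pf\bigr) g \, d\mu_p \Bigr| \le \|g\|_\infty \|h_p\|_\infty \bigl\| \mathcal{L}_p^n f - Pf \bigr\|_{L^1(\lambda)} \le C' \rho^n \|f\|_{BV} \|g\|_\infty.
\]
It remains to identify the leading term $\int Pf \cdot g \, d\mu_p$ with the product $\int f\, d\mu_p \int g\, d\mu_p$ appearing in the statement; this is an algebraic reconciliation of the normalization conventions of \cite{ANV14}, in which the spectral projection is expressed relative to the invariant measure rather than to Lebesgue.

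The main obstacle, flagged parenthetically by the authors, is that $R$ is not literally an instance of the finite-branch random Lasota--Yorke example treated explicitly in \cite{ANV14}, because both $T_0$ and $T_1$ have countably many branches. To handle this one checks that Proposition~3.1 of \cite{ANV14} uses only the abstract data of quasi-compactness on a Banach space continuously embedded in $L^\infty$ together with a simple, isolated leading eigenvalue $1$, rather than the finite-branch structure. These abstract ingredients are supplied for the countably-branched case by \cite{Ino12}, so no further analytic work is required and the proposition follows.
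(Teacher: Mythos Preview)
Your proposal is correct and matches the paper's approach: the paper gives no proof of its own but simply records in the preceding paragraph that Inoue's quasi-compactness together with Proposition~\ref{p:boundedaway} yield a spectral gap, and then cites \cite{ANV14} Proposition~3.1 directly. You have supplied exactly this argument with the spectral-decomposition mechanism spelled out in more detail, so there is nothing to add.
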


In particular, $R$ is mixing. A proof can be found in \cite{Mor85} for example.

\begin{theorem}\label{t:mixing}
The random transformation $R$ is mixing with respect to $m_p \otimes \mu_p$.
\end{theorem}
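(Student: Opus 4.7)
The plan is to leverage the spectral gap from Proposition~\ref{p:edc} together with the product structure of the invariant measure. A standard monotone-class argument reduces the mixing limit
\[
(m_p\otimes\mu_p)(R^{-n}A\cap B)\longrightarrow(m_p\otimes\mu_p)(A)\,(m_p\otimes\mu_p)(B)
\]
to the case where $A=C_A\times J_A$ and $B=C_B\times J_B$ are rectangles with $C_A=[i_1\cdots i_k]$ and $C_B=[j_1\cdots j_\ell]$ cylinders in $\Omega$ and $J_A,J_B\subseteq[0,1]$ subintervals, since such rectangles form a $\pi$-system generating the product Borel $\sigma$-algebra.

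Because $R^n(\omega,x)=(\sigma^n\omega,\,T_{\omega_n}\circ\cdots\circ T_{\omega_1}x)$, for $n>\ell$ the constraints $\omega\in C_B$ (depending on $\omega_1,\dots,\omega_\ell$) and $\sigma^n\omega\in C_A$ (depending on $\omega_{n+1},\dots,\omega_{n+k}$) live on disjoint coordinate blocks and are $m_p$-independent. Integrating over these two blocks separately gives
\[
(m_p\otimes\mu_p)(R^{-n}A\cap B)=m_p(C_A)\,m_p(C_B)\cdot I_n,
\]
where, with $\omega_1,\dots,\omega_\ell$ frozen at $j_1,\dots,j_\ell$ and $S=T_{j_\ell}\circ\cdots\circ T_{j_1}$,
\[
I_n=\sum_{\omega_{\ell+1},\dots,\omega_n}p_{\omega_{\ell+1}}\cdots p_{\omega_n}\int_0^1\mathbf 1_{J_B}(x)\,\mathbf 1_{J_A}\bigl(T_{\omega_n}\circ\cdots\circ T_{\omega_{\ell+1}}\circ Sx\bigr)\,h_p(x)\,dx.
\]

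To match $I_n$ with the random transfer operator, I would use the Koopman--Perron-Frobenius duality $\int\varphi\,(g\circ T_i)\,d\lambda=\int (L_i\varphi)\,g\,d\lambda$, where $L_i$ is the deterministic Perron--Frobenius operator of $T_i$: first to push the fixed initial composition $S$ through its own operator $L_S=L_{j_\ell}\cdots L_{j_1}$, and then to collapse the $p$-weighted sum over $\omega_{\ell+1},\dots,\omega_n$ into a single power of $\mathcal L_p$. This produces
\[
I_n=\int_0^1\mathbf 1_{J_A}(y)\,\mathcal L_p^{\,n-\ell}\bigl(L_S(\mathbf 1_{J_B}h_p)\bigr)(y)\,dy.
\]
Now $f:=L_S(\mathbf 1_{J_B}h_p)$ is of bounded variation (since $h_p$ is and each $L_i$ preserves $BV$ for these piecewise-monotone $C^1$ branches) and satisfies $\int f\,d\lambda=\mu_p(J_B)$. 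Applying the spectral-gap conclusion of Proposition~\ref{p:edc}, i.e.\ $\mathcal L_p^m f\to h_p\int f\,d\lambda$ exponentially, to $f$ and $g=\mathbf 1_{J_A}$ yields $I_n\to\mu_p(J_A)\,\mu_p(J_B)$. Combined with the factorisation above this establishes mixing on rectangles, and hence on the full $\sigma$-algebra.

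The main obstacle is the algebraic bookkeeping of the middle step: one must carefully peel off the deterministic initial block $S$ coming from $C_B$ via its own Perron--Frobenius operator and then recognise the weighted average over the unconstrained coordinates $\omega_{\ell+1},\dots,\omega_n$ as a clean iterate of $\mathcal L_p$. Once this identification is in place, mixing reduces immediately to the spectral gap already established.
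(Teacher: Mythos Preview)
Your argument is correct and follows essentially the same route as the paper: reduce mixing to a generating $\pi$-system and then invoke the exponential decay of correlations from Proposition~\ref{p:edc}. The only cosmetic difference is that the paper works with the dynamical cylinders $[\bar d]\times\Delta(\bar a)_{\bar d}^k$ built from the digit partitions, whereas you use the simpler product rectangles $[j_1\cdots j_\ell]\times J$; your explicit transfer-operator bookkeeping (peeling off $L_S$ and recognising the remaining $p$-weighted sum as $\mathcal L_p^{\,n-\ell}$) is exactly what the paper hides behind the phrase ``a straightforward computation using the previous proposition''.
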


\begin{proof}
Define the function $\phi:\Omega \times [0,1] \to [0,1]$ by $\phi(\omega, x)= \omega_1 + (-1)^{\omega_1}x$. Define the cylinder sets of order 1 by
\[ [d] \times \Delta(a)_j = [d] \times \phi \Big( [d] \times\Big( \frac1{a+1}, \frac1a\Big] \Big).\]
In general the cylinders of order $k$ are given by $[d_1 \cdots d_k] \times \Delta(a_1, \ldots, a_k)_{d_1\cdots d_k}$, where
\[ \Delta(a_1, \ldots, a_k)_{d_1\cdots d_k} = \bigcap_{j=1}^k \Big(T_{d_{j-1}} \circ \cdots \circ T_{d_1}\Big)^{-1} \phi \Big( [d_j] \times\Big( \frac1{a_j+1}, \frac1{a_j}\Big] \Big). \]
Write $[\bar d] \times \Delta(\bar a)_{\bar d}^k$ for $[d_1 \cdots d_k] \times \Delta(a_1, \ldots, a_k)_{d_1\cdots d_k}$. These sets form a generating semi-algebra for the $\sigma$-algebra on $\Omega \times [0,1]$. A straightforward computation using the previous proposition now gives that for any two such cylinders $[\bar d]\times \Delta (\bar a)_{\bar d}^k$ and $[\bar c] \times \Delta(\bar b)_{\bar c}^{\ell}$,
\[ \lim_{n \to \infty} (m_p \otimes \mu_p) \big( R^{-n} \big([\bar c] \times \Delta(\bar b)_{\bar c}^{\ell} \big) \cap \big( [\bar d] \times \Delta (\bar a)_{\bar d}^k \big) \big) \hspace{5.5cm}\]
\[ \hspace{5.5cm} =(m_p \otimes \mu_p) \big([\bar c] \times \Delta(\bar b)_{\bar c}^{\ell}\big) (m_p \otimes \mu_p)\big([\bar d] \times \Delta(\bar a)_{\bar d}^{k}\big),
\]
where the convergence is in $L^1$.
\end{proof}

We also immediately obtain the Central Limit Theorem and the Large Deviation Principle. Let $\phi$ be an observable of bounded variation with $\int_{[0,1]} \phi d\mu_p =0$. Define $X_k : \Omega \times [0,1] \to \mathbb R$ by $X_k (\omega, x) = \phi \big( (T_{\omega_k} \circ \cdots \circ T_{\omega_1})x \big)$ and $S_n = \sum_{k=0}^{n-1} X_k$. The asymptotic variance $\sigma^2$ is defined by
\[ \sigma^2 = \lim_{n \to \infty} \frac1n \int_{\Omega \times [0,1]} S_n^2 \, d(m_p \otimes \mu_p).\]
In \cite{ANV14} it is proved that this limit exists in Proposition 3.2. Let $\mathcal M$ be the set of all Borel probability measures on $[0,1]$ that are absolutely continuous w.r.t.~$\lambda$ with a density of bounded variation.

\begin{theorem}[Central Limit Theorem, see \cite{ANV14} Theorem  3.5]
Let $\nu \in \mathcal M$. Then the process $\big( \frac{S_n}{\sqrt n}\big)_{n \ge 1}$ converges in law to $\mathcal N(0, \sigma^2)$ under the probability measure $m_p \otimes \nu$.
\end{theorem}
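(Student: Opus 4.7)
The plan is to derive this statement as a direct application of Theorem 3.5 of \cite{ANV14}, whose hypotheses are essentially assembled by the preceding results in this section. That abstract CLT requires: (a) a transfer operator $\mathcal{L}_p$ acting quasi-compactly on a Banach space of densities, here the space of BV functions on $[0,1]$; (b) a simple, isolated leading eigenvalue $1$ with the remainder of the spectrum strictly inside the unit disk; (c) exponential decay of correlations for BV observables against $L^{\infty}$ observables; and (d) an invariant density bounded away from zero. Item (a) is provided by Inoue's Theorem~\ref{Inoue} via Proposition~\ref{p:conditions}, items (b) and (c) by Proposition~\ref{p:edc} together with the spectral decomposition of $\mathcal{L}_p$ stated in \cite{Ino12}, and item (d) by Proposition~\ref{p:boundedaway}. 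The membership $\nu \in \mathcal{M}$ ensures the initial density is of bounded variation, which is the regularity demanded of the initial distribution.

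Concretely I would carry out the Nagaev--Guivarc'h argument as follows. Since $\phi$ is of bounded variation and BV is a Banach algebra under the standard norm $\|f\|_{BV} = \|f\|_{\infty} + \bigvee_{[0,1]} f$, the family of twisted operators
\begin{equation*}
\mathcal{L}_p^{(t)} f \;=\; \mathcal{L}_p\bigl(e^{it\phi} f\bigr)
\end{equation*}
depends analytically on $t$ in the operator norm on BV, with $\mathcal{L}_p^{(0)} = \mathcal{L}_p$. Standard analytic perturbation theory then produces, for $|t|$ small, a simple eigenvalue $\lambda(t)$ varying analytically with $t$, an associated spectral projector $\Pi(t)$, and a uniform spectral gap. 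Because $\int \phi \, d\mu_p = 0$, one computes $\lambda(0)=1$, $\lambda'(0)=0$ and $\lambda''(0) = -\sigma^2$, where $\sigma^2$ is precisely the limit appearing in the statement (existence of that limit is Proposition~3.2 of \cite{ANV14}, whose proof rests on the exponential decay of correlations already established).

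For the final step, the characteristic function of $S_n/\sqrt{n}$ under $m_p \otimes \nu$ is
\begin{equation*}
\mathbb{E}_{m_p \otimes \nu}\bigl[e^{itS_n/\sqrt{n}}\bigr] \;=\; \int_{[0,1]} \bigl(\mathcal{L}_p^{(t/\sqrt n)}\bigr)^{n}\rho \; d\lambda,
\end{equation*}
where $\rho$ is the BV density of $\nu$. Splitting $(\mathcal{L}_p^{(t/\sqrt n)})^n = \lambda(t/\sqrt n)^n\, \Pi(t/\sqrt n) + \mathcal{Q}(t/\sqrt n)^n$ with $\mathcal{Q}(t)$ of spectral radius uniformly bounded away from $1$, the Taylor expansion $\lambda(t/\sqrt n)^n \to e^{-\sigma^2 t^2/2}$ together with exponential decay of the $\mathcal{Q}$-term yields pointwise convergence of characteristic functions, hence convergence in law by L\'evy's continuity theorem. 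The main obstacle to a purely turnkey invocation of \cite{ANV14} is that their results are stated for random Lasota--Yorke systems whose branch maps have \emph{finitely} many branches, whereas our $T_0$ and $T_1$ each have countably many; however, only the abstract spectral hypotheses on $\mathcal{L}_p$ are actually used in their proof of Theorem~3.5, and those have all been verified for our system above.
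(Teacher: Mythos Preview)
Your proposal is correct and follows the same approach as the paper: the paper does not prove this theorem at all but simply invokes Theorem~3.5 of \cite{ANV14} after having assembled its hypotheses in the preceding results (quasi-compactness via Inoue, simple leading eigenvalue and spectral gap via Proposition~\ref{p:boundedaway} and the spectral decomposition from \cite{Ino12}, exponential decay of correlations in Proposition~\ref{p:edc}). Your outline of the Nagaev--Guivarc'h perturbation argument behind that citation, and your remark that only the abstract spectral hypotheses on $\mathcal{L}_p$ are used in \cite{ANV14} so the countable-branch issue is harmless, go beyond what the paper itself supplies.
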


\begin{theorem}[Large Deviation Principle, see \cite{ANV14} Theorem 3.6]
Suppose $\sigma^2 >0$. Then there exists a non-negative rate function $c$, continuous, strictly convex, vanishing only at 0, such that for every $\nu \in \mathcal M$ and every sufficiently small $\varepsilon >0$,
\[ \lim_{n \to \infty} \frac1n \log \big( (m_p \otimes \nu)(S_n > n\varepsilon) \big) =-c(\varepsilon).\]
\end{theorem}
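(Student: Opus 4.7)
The statement is an application of Theorem~3.6 of \cite{ANV14}; the task is therefore to check that $R$, together with the observable $\phi$, meets the abstract hypotheses of that theorem, and to explain where the rate function $c$ comes from and why it has the claimed properties. The spectral ingredients are already in hand: by Inoue's decomposition together with Proposition~\ref{p:boundedaway}, the transfer operator $\mathcal L_p$ is quasi-compact on $BV([0,1])$ with $1$ a simple isolated eigenvalue (eigenfunction $h_p$ bounded above and away from $0$), and the rest of its spectrum contained in a disk of radius $\rho<1$; Proposition~\ref{p:edc} gives the corresponding exponential decay of correlations.

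Next I would introduce the tilted family of operators
\[ \mathcal L_{p,t} f := \mathcal L_p\bigl(e^{t\phi} f\bigr), \qquad t\in\mathbb C. \]
Since $\phi\in BV([0,1])$, multiplication by $e^{t\phi}$ is a bounded operator on $BV$ depending analytically on $t$, so $t\mapsto \mathcal L_{p,t}$ is an analytic family. By Kato perturbation theory the simple isolated eigenvalue $1$ of $\mathcal L_{p,0}=\mathcal L_p$ persists in a complex neighborhood of $0$ as an analytic curve of simple eigenvalues $\lambda(t)$, with $\lambda(0)=1$, and the associated spectral projectors $\Pi_t$ depend analytically on $t$ as well. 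Write $\Lambda(t):=\log\lambda(t)$.

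The standard Nagaev--Guivarc'h identification of cumulant generating functions then gives
\[ \int e^{tS_n}\,d(m_p\otimes\mu_p) = \int \mathcal L_{p,t}^n h_p\,d\lambda = \lambda(t)^n\bigl(1+O(\rho_t^n)\bigr) \]
for $t$ near $0$, with $\rho_t<1$ uniformly on a neighborhood. Differentiating twice at $t=0$ and using $\int\phi\,d\mu_p=0$ yields $\Lambda'(0)=0$ and $\Lambda''(0)=\sigma^2>0$, so $\Lambda$ is analytic and strictly convex near $0$. Defining the rate function as the Legendre transform
\[ c(\varepsilon) := \sup_{t\in\mathbb R}\bigl(t\varepsilon-\Lambda(t)\bigr), \]
strict convexity of $\Lambda$ on a neighborhood of $0$ yields the claimed properties of $c$ for $\varepsilon$ in a neighborhood of $0$: non-negativity, continuity, strict convexity and $c(0)=0$ with $c(\varepsilon)>0$ for $\varepsilon\neq 0$ small. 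The extension from the stationary initial measure $m_p\otimes\mu_p$ to $m_p\otimes\nu$ with arbitrary $\nu\in\mathcal M$ is routine: the density of $\nu$ lies in $BV$, so one replaces $h_p$ by $d\nu/d\lambda$ in the asymptotic for $\int e^{tS_n}\,d(m_p\otimes\nu)$, and the projection onto the one-dimensional top eigenspace gives the same exponential growth rate $\lambda(t)^n$. The large deviation estimate then follows by the standard Gärtner--Ellis argument.

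The one point that genuinely needs checking, and is the main conceptual obstacle, is that $R$ really fits the framework of \cite{ANV14}: their Example~2.1 (random Lasota--Yorke maps with finitely many branches) does not apply, because $T_0$ and $T_1$ each have countably many branches. What saves us is that Theorem~3.6 of \cite{ANV14} is proved from abstract spectral hypotheses on the annealed transfer operator, namely quasi-compactness on $BV$, simplicity of the leading eigenvalue, spectral gap, and a density bounded away from zero. All of these have been verified for our $\mathcal L_p$ in the preceding propositions via Inoue's Theorem~\ref{Inoue} and Proposition~\ref{p:boundedaway}, so the abstract theorem applies and delivers the stated LDP.
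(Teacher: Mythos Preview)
Your proposal is correct and in fact goes well beyond what the paper does: the paper gives no proof at all for this theorem, stating it purely as a citation of \cite{ANV14} Theorem~3.6 after having verified, in the preceding paragraphs, that $\mathcal L_p$ is quasi-compact with simple leading eigenvalue $1$, no other peripheral spectrum, and density $h_p$ bounded above and away from zero (Proposition~\ref{p:boundedaway}). Your sketch of the Nagaev--Guivarc'h machinery (tilted operators $\mathcal L_{p,t}$, analytic perturbation of the top eigenvalue, Legendre transform for the rate function) is the content of the cited result in \cite{ANV14} rather than something the present paper supplies, but it is accurate and your identification of the one nontrivial point---that $R$ falls outside the finitely-branched Example~2.1 of \cite{ANV14} yet still satisfies the abstract spectral hypotheses via Inoue's theorem---matches exactly the remark the paper makes just before Proposition~\ref{p:edc}.
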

In particular these results hold for $\lambda$ and $\mu_p$.

\section{Further Properties of the continued fractions}
Here we make a few observations about the continued fractions produced by $K$.

\subsection{Counting Expansions}
It is clear that $K$ produces all semi-regular continued fractions of points $x \in [-1,1]$. Since typically each sequence $\omega \in \Omega$ characterises a unique continued fraction expansion, $K$ will generate uncountably many different continued fraction expansions for most $x$.


\begin{prop}\label{p:many}
Each $x \in [-1,1]\backslash \mathbb Q$ has uncountably many different continued fraction expansions given by $K$. Each $x \in ([-1,1] \cap \mathbb Q)\backslash \{0\}$ has countably infinitely many.
\end{prop}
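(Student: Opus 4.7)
The plan is to split into the irrational and rational cases and exploit the fact that what $K$ records is the full expansion $(d_n,\epsilon_n)_{n \ge 0}$ with $\epsilon_n = (-1)^{\omega_n}$, so the coordinates of $\omega$ which actually appear in the expansion can be read off directly from the signs. For $x \in [-1,1]\setminus\mathbb{Q}$, I would first show by induction on $n$ that $\pi(K^n(\omega,x))$ is irrational: if $y$ is nonzero and irrational with $|y| \in (1/(k+1),1/k]$, then $|1/y| - (k+\omega_1)$ is again nonzero and irrational. Thus the orbit never reaches $0$ or a partition endpoint $\pm 1/k$, every expansion is infinite, and the digits $d_n$ and signs $\epsilon_n$ are well defined for all $n$. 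Since $\epsilon_n = (-1)^{\omega_n}$ is part of the expansion, the map $\omega \mapsto (d_n,\epsilon_n)_n$ is injective on $\Omega$, which yields $|\Omega| = 2^{\aleph_0}$ pairwise distinct expansions.

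For $x = p/q \in (-1,1) \cap \mathbb{Q}$ with $x \ne 0$, I would analyse how the denominator of $\pi(K^n(\omega,x))$ evolves along the orbit. Writing $|\pi(K^{n-1})| = a/b$ in lowest terms with $b > 1$ and $b = ka + r$ with $0 \le r < a$, a direct computation yields $\pi(K^n) \in \{r/a,(r-a)/a\}$; since $\gcd(a,r) = \gcd(a,b) = 1$, both fractions are already in lowest terms with denominator $a < b$. Iterating, the orbit must reach denominator $1$ in at most $q$ steps, and because $\pi(K^n) \in [-1,1)$ for every $n \ge 1$, the only possible terminal values are $-1$ and $0$. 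The map $K$ fixes $0$, while $K(\omega,-1) = (\sigma\omega,-\omega_1)$, so the choice $\omega_n = 0$ at $-1$ exits to $0$ and $\omega_n = 1$ keeps the orbit at $-1$. Consequently every orbit either hits $0$ at some step $n$ (yielding a finite expansion determined by the prefix $\omega_1,\dots,\omega_{n-1}$) or enters $-1$ at some first step $N$ with $\omega_n = 1$ for all $n > N$ (yielding an infinite expansion with tail $2,2,2,\dots$, again determined by a finite prefix). Both types of expansion are coded by a finite binary string, so there are at most countably many.

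To exhibit infinitely many, I would take $\omega = (1,1,1,\dots)$: since $\omega_n = 1$ sends the new value into $[-1,0)$, the orbit never hits $0$ and must therefore reach $-1$ at some first step $N_0$. For each $m \ge 0$ the sequence $\omega^{(m)}$ defined by $\omega^{(m)}_j = 1$ for $j \le N_0 + m$ and $\omega^{(m)}_{N_0+m+1} = 0$ then produces a finite expansion of length exactly $N_0 + m + 1$; together with the infinite expansion coming from $\omega = (1,1,\dots)$ these are pairwise distinct, yielding countably infinitely many.

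The main obstacle I anticipate is the rational-case structural analysis: one has to verify not just that the denominator strictly decreases along every orbit but also that the only non-trivial asymptotic behaviour of $K$ over the rationals is the fixed point at $-1$, so that the description of infinite expansions really is ``finite prefix followed by a tail of $2$s''. Once this is in place the counting argument and the explicit construction are routine, and the irrational case is essentially formal given the observation that $\epsilon_n$ is recorded in the expansion.
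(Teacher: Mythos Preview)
Your proposal is correct and follows essentially the same route as the paper. Both arguments show that the orbit of an irrational point stays irrational (hence never terminates, so the sign sequence $(\epsilon_n)=((-1)^{\omega_n})$ makes $\omega\mapsto\text{expansion}$ injective), and both show that for a nonzero rational the orbit reaches a terminal configuration in finitely many steps by tracking a strictly decreasing integer along the orbit. The only cosmetic difference is where you place the terminal analysis: the paper stops when the orbit hits $\pm 1/k$ and then enumerates the three possible tails $(d_1,\dots,d_N,k)$, $(d_1,\dots,d_N,k{+}1,2,\dots,2,1)$, $(d_1,\dots,d_N,k{+}1,2,2,\dots)$, whereas you go one step further to the absorbing set $\{0,-1\}$ and describe the dynamics there (fixed at $0$; at $-1$ the choice $\omega_n=1$ loops with digit $2$, $\omega_n=0$ exits with digit $1$). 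These are the same picture viewed one iterate apart. Two small points to tidy: your rational argument is written for $x\in(-1,1)$, so just note that $x=\pm 1$ already sits in the terminal set; and when $r=0$ (equivalently $a=1$) the fraction $r/a$ has denominator $1$, not $a$, which is exactly the termination step rather than a further decrease.
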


\begin{proof}
To prove this we first show that $x \in ([-1,1] \cap \mathbb Q) \backslash \{0 \}$ if and only if for each $\omega \in \Omega$ there exists an $n \ge 0$ such that $\pi \big( K^n(\omega,x)\big) \in \big\{ \frac1k, -\frac1k \, : \, k \in\mathbb N\big\}$. One direction is clear, since if $x$ is irrational then $\pi(K(\omega,x))$ is irrational for any $\omega\in\Omega$. Now assume that $x \in [-1,1] \cap \mathbb Q \backslash \{0 \}$. First note that $\pi \big( K(\omega,x)\big) \in \{-1,0,1\}$ if and only if $x \in \big\{ \frac1k, -\frac1k \, : \, k \ge 1\big\}$. Write $x = \frac{r_1}{r_0}$ with $r_0,r_1 \in \mathbb Z$ and $|r_1| \le |r_0|$. If $|r_0|=|r_1|$, then the proposition is obtained with $n=0$. If not, then
\[ \pi \big( K(\omega,x)\big) = \frac{(-1)^{\omega_0}}{x} - d_1 = \frac{(-1)^{\omega_0}r_0 - r_1 d_1}{r_1} = \frac{r_2}{r_1} \in [-1,1].\]
Here $r_2 \in \mathbb Z$ and $|r_2| \le |r_1|$. If $|r_2|=|r_1|$, then we have the proposition with $n=0$. If not, then by continuing in the same manner we obtain a sequence of integers $r_0,r_1, \ldots, r_n$ with $|r_n| < \cdots < |r_1| < |r_0|$. This shows that the process must terminate after at most $r_0$ steps.

Now suppose $x \in [-1,1] \backslash \mathbb Q$. Then there is no $\omega \in \Omega$ and no $n \ge 0$, such that $\pi \big( K^n(\omega, x) \big) \in \big\{ \pm \frac1{k} \, : \, k \ge 1 \big\}$. Hence, each $\omega$ produces a unique continued fraction expansion for $x$. This gives the first part of the proposition.

For the second part of the statement, let $\frac{p}{q}$ be an arbitrary element of $([-1,1] \cap \mathbb Q)\backslash \{0\}$ with $p \in \mathbb Z$, $q \in \mathbb N$ and $0 < |p|< q$. Then for each $\omega \in \Omega$ there is an $N \le q$, such that $\pi \big( K^N (\omega, \frac{p}{q})\big) \in \big\{ \frac1k, -\frac1k \big\}$. If $\omega_n=1$ for all $n > N$, then there is some $k \ge 1$ such that
\[ \Big(d_n \Big(\omega, \frac{p}{q} \Big) \Big)_{n \ge 1} = (d_1, \ldots, d_N, k+1, 2, 2, 2, \ldots).\]
If there is a smallest integer $M\ge 2$ such that $\omega_{N+M}=0$, then
\[ \Big(d_n \Big(\omega, \frac{p}{q} \Big) \Big)_{n \ge 1} = (d_1, \ldots, d_N, k+1, \underbrace{2, \ldots, 2}_{M-1 \text{ times}}, 1).\]
Finally, if $\omega_{N+1}=0$, then
\[ \Big(d_n \Big(\omega, \frac{p}{q} \Big) \Big)_{n \ge 1} = (d_1, \ldots, d_N, k).\]
Hence, $K$ generates only countably many different expansions for $x$.
\end{proof}

Proposition~\ref{p:many} gives all the possible endings of digit sequences of rational points $\frac{p}{q}$ generated by $K$. From Figure~\ref{f:complete} it is clear however that $\frac{p}{q}$ also has expansions that are not generated by $K$. The corresponding digit sequences are
\[ (d_1, \ldots, d_N, k-1, 1), \quad (d_1, \ldots, d_N, k-1, 2, 2, 2, \ldots),\]
and for each $M \ge 2$,
\[ (d_1, \ldots, d_N, k-1, \underbrace{2, \ldots, 2}_{M-1 \text{ times}}, 1).\]
The fact that $K$ does not generate these expansions is a consequence of having defined $K$ by taking the intervals $\big( \frac1{k+1}, \frac1k \big]$ left-open and right-closed. So all points in $\mathbb [-1,1] \cap \mathbb Q$ have expansions of the form (\ref{q:cfeqn}) that are not generated by $K$, but we know exactly what the missing expansions are.

\subsection*{$\alpha$-continued fractions}
The $\alpha$-continued fraction transformation was first introduced by Nakada in \cite{Nak81}. Given $\alpha \in [0,1]$, define the map $T_{\alpha}: [\alpha-1, \alpha) \to [\alpha-1, \alpha)$ by $T_{\alpha}0=0$ and
\[T_{\alpha}x = \Big| \frac1x \Big| -  \Big\lfloor \Big| \frac1x \Big| - (\alpha -1) \Big\rfloor\]
for $x \neq 0$. First note that for $x < 0$, $T_1(1+x) = \big| \frac1x \big| - \lfloor \big| \frac1x \big| \rfloor$. Also note that
\[ \Big|\frac1x \Big| - \Big\lfloor \Big| \frac1x \Big| \Big\rfloor < \alpha \quad \Leftrightarrow \quad \Big\lfloor \Big| \frac1x \Big| -\alpha +1 \Big\rfloor = \Big\lfloor \Big| \frac1x \Big| \Big\rfloor\]
and
\[ \Big|\frac1x \Big| - \Big\lfloor \Big|\frac1x \Big| \Big\rfloor \ge \alpha \quad \Leftrightarrow \quad \Big\lfloor \Big|\frac1x \Big| -\alpha +1 \Big\rfloor = \Big\lfloor  \Big| \frac1x \Big| \Big\rfloor +1.\]

We can define a sequence $\omega = (\omega_n)_{n \ge 1} \in \Omega$ such that for each $n \ge 0$, $\pi \big(K^n (\omega,x)\big) = T^n_{\alpha}x$ as follows. Note that if $0< x < \alpha$, then either $T_0 x \in [\alpha-1, \alpha)$ in which case we set $\omega_1=0$ or $T_0 x -1 \in [\alpha-1, \alpha)$, in which case we set $\omega_1=1$. Similarly for $\alpha-1 \le x<0$ and the map $T_1 (x+1)$. Then
\[ T_{\omega_0} (x +\omega_0) - \omega_1 = T_{\alpha}x.\]
We can apply the same procedure for every $n \ge 2$ with the point $(T_{\omega_{n-1}} \circ \cdots \circ T_{\omega_0}) (x +\omega_0) - \omega_n$. One can easily show, using (\ref{q:pi-K}) and induction, that for each $x \in [\alpha-1, \alpha)$ and $n \ge 1$, one gets $\pi \big( K^n(\omega,x) \big) = T^n_{\alpha}x$.

\subsection{Restrictions on digits}
For regular continued fractions, expansions with restrictions on the digits have been thoroughly investigated. Continued fractions with bounded digits have applications in many fields, ranging from formal language theory to diophantine approximation to pseudo-random number generators, see \cite{Sha92} for a survey on this topic. For random continued fractions the following is easily observed.

\begin{prop}\label{p:subsetsN}
Let $\Lambda \subset \mathbb N$ be such that it does not miss two or more consecutive integers, i.e., if $k \not \in \Lambda$, then $k-1,k+1 \in \Lambda$. Then all numbers in $[-1,1]$ have a continued fraction expansion of the form (\ref{q:cfeqn}) with $d_n \in \Lambda$ for each $n$.
\end{prop}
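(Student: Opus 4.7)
The plan is to construct $\omega \in \Omega$ greedily, one coordinate at a time, so that every digit $d_n(\omega, x)$ produced by $K$ lies in $\Lambda$. The key observation, built into the definition of $K$, is that whenever the current iterate $y = \pi K^{n-1}(\omega, x)$ satisfies $|y| \in \bigl(\tfrac{1}{k+1}, \tfrac{1}{k}\bigr]$ for some $k \ge 1$, the next digit is $d_n = k + \omega_n$; that is, $\omega_n$ offers a binary choice between $d_n = k$ and $d_n = k+1$. The hypothesis on $\Lambda$ says precisely that two consecutive positive integers cannot both be absent from $\Lambda$, so $\{k, k+1\} \cap \Lambda \neq \emptyset$ and one can always pick $\omega_n$ to ensure $d_n \in \Lambda$.

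I would proceed by induction on $n$: given $\omega_1, \dots, \omega_{n-1}$ with $d_1, \dots, d_{n-1} \in \Lambda$, if $\pi K^{n-1}(\omega, x) = 0$ the expansion has already terminated and there is nothing left to check; otherwise choose $\omega_n$ as above. This yields a sequence $(d_n)$, finite or infinite, all of whose entries lie in $\Lambda$. For $x \in [-1,1] \setminus \mathbb{Q}$ the convergence proposition proved in Section~2 then shows that this sequence gives a valid expansion of $x$ of the form (\ref{q:cfeqn}).

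The only case requiring a little extra care is rational $x$, for which the iterate lands on $\pm 1/k$ after finitely many steps and the subsequent digits are restricted to the three tail patterns described in the proof of Proposition~\ref{p:many}: namely $(k)$, $(k+1, 2, 2, 2, \dots)$, or $(k+1, 2, \dots, 2, 1)$. I would verify case by case that the greedy choice always produces one of these tails lying entirely inside $\Lambda$: if $k \in \Lambda$ take the terminating tail $(k)$; otherwise $k+1 \in \Lambda$ by hypothesis, and since consecutive gaps are forbidden at least one of $1, 2$ belongs to $\Lambda$, so take either $(k+1, 1)$ or the infinite tail $(k+1, 2, 2, \dots)$. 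There is no genuine obstacle --- the whole argument simply matches the binary-choice structure of $K$ with the ``no two consecutive gaps'' structure of $\Lambda$ --- and the rational tails are the only small bookkeeping point.
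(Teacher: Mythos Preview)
Your argument is correct and is essentially the same as the paper's, only spelled out in more detail: the paper's proof is a two-sentence appeal to Figure~\ref{f:complete}, observing that after deleting the branches corresponding to digits outside $\Lambda$ the remaining branches still cover $[-1,1]$, which is exactly your observation that for $|y|\in(1/(k+1),1/k]$ the two available digits $k,k+1$ always meet $\Lambda$. Your extra bookkeeping for rational $x$ is more careful than the paper (which does not single out that case) but not a genuinely different approach.
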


\begin{proof}
From Figure~\ref{f:complete} it is clear that if we delete the branches $\frac1{x}-k$ and $-\frac1{x}-k$ for $k \in \Lambda$, then all the other branches still cover all of the interval $[-1,1]$. This means that for each $x \in [-1,1]$ we can choose an $\omega \in \Omega$ such that $d_n(\omega,x) \neq k$ for all $n \ge 1$.
\end{proof}

From this proposition it follows immediately that each $x \in [-1,1]$ has a continued fraction expansion with only even or only odd digits. Moreover, note that if we remove all branches corresponding to the odd (or even) digits, then there is no overlap in the system except on the points $\pm \frac1k$, implying that each irrational $x \in [-1,1]$ has a unique expansion with only odd (or even) digits.

Regular continued fractions with bounded digits are very well studied. It has been known since the works of Jarnik (\cite{Jar32}) and Good (\cite{Goo41}) that for each $N \ge 2$ the set of points $x \in [0,1]$ that has a regular continued fraction expansion with digits not exceeding $N$ has Hausdorff dimension strictly between 0 and 1. Many people have given estimates for the Hausdorff dimension of the set $E_{1,2}$ of points with a regular continued fraction expansion using only digits 1 and 2, with the most recent result by Pollicott and Jenkinson (\cite{JP01}) calculated the first 25 digits of the Hausdorff dimension of this set:
\[ \dim_H (E_{1,2}) = 0.5312805062772051416244686\ldots.\] 
In the random case we have the following.
\begin{prop}\label{p:1and2}
The set of points $x \in [-1,1]$ that have a continued fraction expansion of the form (\ref{q:cfeqn}) with $d_n \in \{1,2\}$ for all $n \ge 1$, has positive Lebesgue measure. In fact, it is a countable union of intervals, containing the interval $\big(\frac12,1\big]$.
\end{prop}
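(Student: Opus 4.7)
The plan is to show that every $x \in (1/2, 1]$ admits some $\omega \in \Omega$ for which all digits $d_n(\omega, x)$ lie in $\{1, 2\}$; by the $x \mapsto -x$ symmetry of the construction this gives $[-1, -1/2)$ as well, where both are contained in the set $E$ in the statement, and the positive Lebesgue measure claim follows.

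The first step is to translate the digit restriction into a condition on $u_n := |\pi(K^n(\omega, x))|$. The relation $d_n = k + \omega_n$ for $u_{n-1} \in (1/(k+1), 1/k]$ shows that $d_n \in \{1, 2\}$ requires $k \in \{1, 2\}$: when $u_{n-1} \in (1/2, 1]$, either $\omega_n = 0$ (giving $d_n = 1$) or $\omega_n = 1$ (giving $d_n = 2$) is admissible; when $u_{n-1} \in (1/3, 1/2]$ only $\omega_n = 0$ works; for $u_{n-1} \le 1/3$ no legal continuation exists.

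The heart of the proof is a greedy inductive construction. Starting from $u_0 = x \in (1/2, 1]$, I would set $\omega_{n+1} = 0$ whenever $u_n \in (1/2, 2/3]$ and $\omega_{n+1} = 1$ whenever $u_n \in (2/3, 1]$. Writing $u_{n+1} = 1/u_n - 1$ in the first case and $u_{n+1} = 2 - 1/u_n$ in the second, a short computation verifies the invariant $u_{n+1} \in (1/2, 1]$. The only exceptional point is $u_n = 2/3$, where either choice sends $u_{n+1}$ to $1/2$; but then the forced continuation $\omega_{n+2} = 0$, $d_{n+2} = 2$ gives $u_{n+2} = 0$ and the expansion terminates, still with all digits in $\{1, 2\}$. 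In every case the expansion produced has digits only in $\{1, 2\}$, so $(1/2, 1] \subseteq E$.

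For the statement that $E$ is a countable union of intervals, I would observe that the inverse branches compatible with digits in $\{1, 2\}$ are the M\"obius maps $y \mapsto \pm 1/(d + y)$ with $d \in \{1, 2\}$, each carrying intervals to intervals. Applying any finite composition of these branches to $(1/2, 1] \cup [-1, -1/2)$ produces an interval contained in $E$, and there are only countably many finite compositions. The main delicate point in the proof is precisely the boundary behaviour at $u = 2/3$: one has to check that the forced finite continuation there keeps all digits in $\{1, 2\}$. Apart from this single case, the induction is a routine bookkeeping exercise.
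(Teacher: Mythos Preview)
Your greedy construction showing $(1/2,1]\subseteq A_{1,2}$ is correct and is actually more direct than the paper's argument. The paper instead works from the outside in: it defines the bad set $E\subseteq(1/3,1/2]\cup[-1/2,-1/3)$ of points whose forced $S_2$-orbit eventually lands in $(0,1/3)$, shows $E$ is a countable union of intervals contained in $[2/5,1/2]\cup[-1/2,-2/5]$, and then checks by interval arithmetic that from any point of $(1/2,1]$ one can always choose between $S_1$ and $S_2$ so as to avoid $E$. Your invariant interval $(1/2,1]$ with the switching rule at $2/3$ accomplishes the same thing with less bookkeeping, and your treatment of the boundary case $u_n=2/3$ (and the resulting finite expansion) is fine.

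The gap is in your argument that $A_{1,2}$ \emph{is} a countable union of intervals. You show only that the countable union of finite inverse-branch images of $(1/2,1]\cup[-1,-1/2)$ is contained in $A_{1,2}$; you never argue the reverse inclusion, and in fact it fails. The point $\sqrt 2-1\in(1/3,1/2)$ has the legitimate all-$2$'s expansion
\[
\sqrt 2-1=\cfrac{1}{2+\cfrac{-1}{2+\cfrac{-1}{2+\ddots}}},
\]
so $\sqrt 2-1\in A_{1,2}$, yet its orbit under $S_2$ is the fixed point and never enters $(1/2,1]$; hence $\sqrt 2-1$ is not captured by any of your finite compositions. More generally, all of $(1/3,2/5)$ lies in $A_{1,2}$ but never visits $(1/2,1]$ under the allowed maps. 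The paper avoids this issue by giving the exact description $A_{1,2}=\bigl([1/3,1]\cup[-1,-1/3]\bigr)\setminus E$, from which the countable-union-of-intervals structure follows because the removed set $E=\bigcup_{n\ge1}S_2^{-n}(0,1/3)$ consists of countably many disjoint open intervals accumulating only at the fixed point $\sqrt 2-1$. To complete your approach you would need either this explicit complement description, or a separate argument showing $A_{1,2}$ is a closed set whose complement in $[1/3,1]\cup[-1,-1/3]$ has only isolated components.
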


\begin{proof}
Let $A_{1,2}$ denote the set of points that have an expansion of the form (\ref{q:cfeqn}) with only digits 1 and 2. Then $A_{1,2}\cap \big(-\frac13,\frac13\big)=\emptyset$. First note that the points $\pm\frac1k$, $ k =2, 3$, have countably many expansions using only 1 and 2 as discussed after the proof of Proposition~\ref{p:many}. These expansions are not generated by $K$. For all other points in $A_{1,2}$ such expansions are generated by $K$. We consider these expansions, see Figure~\ref{f:1and2} for an illustration.

The first digit of any $x\in A_{1,2} \cap \big( \big(\frac13, \frac12\big] \cup \big[-\frac12, -\frac13\big) \big)$ must be 2, coming from $\omega_1=0$. For $n \ge 1$, let $S_n$ denote the map $S_n x = \big|\frac1x\big| -n$. Then for points in the set
\[ E = \bigcup_{n \ge 1} S_2^{-n}\Big(0,\frac13\Big) \cap \Big( \Big( \frac13, \frac12\Big] \cup \Big[ -\frac12, -\frac13\Big) \Big)\]
$K$ does not generate an expansion with only digits 1 and 2. Note that $S_2$ has fixed point $\sqrt 2-1$ and $|S_2' x|>1$ for all $x \in \big(\frac13, \frac12\big] \cup \big[-\frac12, -\frac13\big)$. Since
\[ \sqrt 2-1 \not \in S^{-1}_2 \Big[0,\frac13\Big) = \Big( \frac37, \frac12 \Big] \cup \Big[-\frac12, -\frac37 \Big)\]
and
\[ S^{-2}_2 \Big[0,\frac13\Big) = \Big[ \frac25, \frac7{17} \Big) \cup \Big( -\frac7{17}, \frac25 \Big],\]
it follows that $E \subseteq \big[\frac25, \frac12\big] \cup \big[-\frac12, -\frac25\big]$ and that $E$ is a countable union of intervals. For the set $\big(\frac12, 1\big] \cap \big[-1, -\frac12 \big)$ we have the following. Since
\[ S_1^{-1}\Big[0,\frac13\Big) = \Big( \frac34,1 \Big] \cup \Big[-1,-\frac34\Big) \quad \text{ and } \quad S_2^{-1}\Big(-\frac13,0\Big] = \Big[ \frac12, \frac35 \Big)\cup \Big(-\frac35,- \frac12 \Big]\]
we see that on $\big( \frac34,1 \big]$ and $\big[ -1, -\frac34 \big)$ we will have to use $S_2$ and on $\big( \frac12, \frac35 \big)$ and $\big(-\frac35,- \frac12 \big)$  we will have to use $S_1$. Note that
\[ S_1^{-1} E \cap \Big(\frac12, 1\Big] \subseteq \Big[ \frac23, \frac57 \Big].\]
Since $\frac35 < \frac23 < \frac57 < \frac34$, on all of the interval $\big[\frac12, 1\big]$ we can avoid being mapped into $E$ by making appropriate choices between $S_1$ and $S_2$. Hence
\[ A_{1,2} = \Big(\Big[ \frac13, 1\Big] \cup \Big[-1, -\frac13\Big] \Big) \backslash E. \qedhere\]
\end{proof}

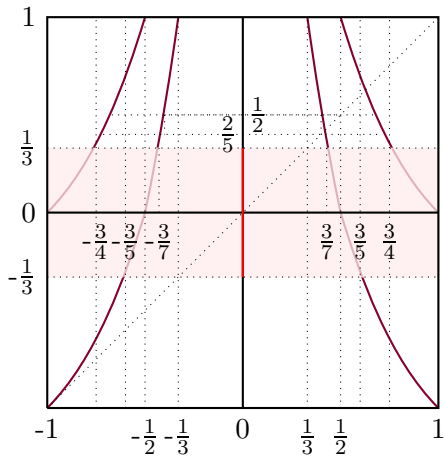
\begin{figure}[h]
\begin{center}
\begin{tikzpicture}[scale=2.6]
\draw[thick, purple!70!black] (1,0) .. controls (.75,.25) and (.6,.67) .. (.5,1);
\draw[thick, purple!70!black] (.5,0) .. controls (.42,.38) and (.36,.78) .. (.33,1);
\draw[thick, purple!70!black] (-1,0) .. controls (-.75,.25) and (-.6,.67) .. (-.5,1);
\draw[thick, purple!70!black] (-.5,0) .. controls (-.42,.38) and (-.36,.78) .. (-.33,1);
\draw[thick, purple!70!black] (1,-1) .. controls (.75,-.75) and (.6,-.33) .. (.5,0);
\draw[thick, purple!70!black] (-1,-1) .. controls (-.75,-.75) and (-.6,-.33) .. (-.5,0);
\draw[draw=red!7, fill=red!7, opacity=.75, fill opacity=.75] (-1,-.33) rectangle (1,.33);
%
\draw[dotted](-.5,-1)--(-.5,1)(-.33,-1)--(-.33,1)(.33,-1)--(.33,1)(.5,-1)--(.5,1)(-1,-1)--(1,1);
\draw[dotted](-1,-.33)--(1,-.33)(-1,.33)--(1,.33);
\draw[dotted](.75,-1)--(.75,1)(-.75,-1)--(-.75,1)(.6,-1)--(.6,1)(-.6,-1)--(-.6,1);
\draw[dotted](.4286,.33)--(.4286,0)(-.4286,.33)--(-.4286,0);
\draw[dotted](-.4,.5)--(0,.5)--(.5,.5);
\draw[dotted](-.67,.5)--(.67,.5)(-.714,.4)--(.714,.4)(.4,.5)--(.4,.4)(-.4,.5)--(-.4,.4);

\draw[thick](-1,-1)node[below]{\small -1}--(-.5,-1)node[below]{\small -$\frac12$}--(-.33,-1)node[below]{\small -$\frac13$}--(0,-1)node[below]{\small 0}--(.33,-1)node[below]{\small $\frac13$}--(.5,-1)node[below]{\small $\frac12$}--(1,-1)node[below]{\small 1}--(1,1)--(-1,1)node[left]{\small 1}--(-1,.33)node[left]{\small $\frac13$}--(-1,0)node[left]{\small 0}--(-1,-.33)node[left]{\small -$\frac13$}--(-1,-1);
\draw[thick](-1,0)--(-.75,0)node[below]{\small -$\frac34$}--(-.6,0)node[below]{\small -$\frac35$}--(-.4286,0)node[below]{\small -$\frac37$}--(.4286,0)node[below]{\small $\frac37$}--(.6,0)node[below]{\small $\frac35$}--(.75,0)node[below]{\small $\frac34$}--(1,0);
\draw[thick](0,-1)--(0,.4)node[left=-2pt]{\small $\frac25$}--(0,.5)node[right=-2pt]{\small $\frac12$}--(0,1);
\draw[thick, red](0,-.33)--(0,.33);
\end{tikzpicture}
\caption{Points that have a continued fraction expansion generated by $K$ with only digits 1 and 2 only use the maps $x \mapsto \big|\frac1{x} \big| - 1$ and $x \mapsto \big|\frac1{x} \big| - 2$.}
\label{f:1and2}
\end{center}
\end{figure}

In 1994 Lehner (\cite{Leh94}) proved that every irrational number $x$ in the interval $[1,2)$ has a unique continued fraction expansion of the form
\[ x = b_0 + \cfrac{\epsilon_0}{b_1 + \cfrac{\epsilon_1}{b_2 + \ddots}},\]
where $(\epsilon_n,b_n)$ equals either $(1,1)$ or $(-1,2)$. In \cite{DK00} Dajani and Kraaikamp established a relation between these Lehner expansions and the regular continued fraction expansions, produced by the Gauss map. Here we see that all points in $A_{1,2}$ have a Lehner-like expansion with $b_0=0$. Moreover, there are points that have more than 1 such expansions. This holds for example for all points in the interval $\big( \frac35, \frac23 \big)$ or in the interval $\big( \frac57, \frac34 \big)$.

In spite of the result from Proposition~\ref{p:1and2}, we have the following results on the asymptotic geometric and arithmetic mean of the digits, which are similar for the regular continued fractions.
\begin{prop}
For $(m_p \otimes \mu_p)$-a.e.~$(\omega,x) \in \Omega \times [0,1]$, we have
\[ 1 < \lim_{n \to \infty} \big(d_1(\omega,x)\cdots  d_n(\omega,x)\big)^{1/n} < \infty,\]
and
\[ \lim_{n \to \infty} \frac{d_1(\omega,x)+\cdots + d_n(\omega,x)}{n} = \infty.\]
\end{prop}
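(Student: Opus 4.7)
The plan is to apply Birkhoff's ergodic theorem for $R$---which is ergodic since $m_p \otimes \mu_p$-mixing by Theorem~\ref{t:mixing}---to the observables $\log b$ and $b$, where $b$ is the digit function from \eqref{q:bi}. The link to the $d_n$'s is the identity \eqref{q:dandb}: $d_n(\omega,x) = b(R^{n-1}(0\omega,x))$. Since $(0\omega)_1 = 0$, we have $R(0\omega,x) = (\omega, T_0 x)$ and more generally $R^k(0\omega,x) = R^{k-1}(\omega, T_0 x)$ for $k \ge 1$. Therefore
\[\frac{1}{n}\sum_{k=1}^n \log d_k(\omega,x) = \frac{\log d_1(\omega,x)}{n} + \frac{n-1}{n} \cdot \frac{1}{n-1}\sum_{j=0}^{n-2} \log b\bigl(R^j(\omega, T_0 x)\bigr).\]
The first term vanishes whenever $d_1 < \infty$, and the second is handled by Birkhoff applied at the point $(\omega, T_0 x)$. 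To justify this I will note that the map $(\omega,x) \mapsto (\omega, T_0 x)$ is $(m_p \otimes \mu_p)$-non-singular, because $\mu_p$ is equivalent to Lebesgue by Proposition~\ref{p:boundedaway} and $T_0$ is piecewise $C^1$ and Lebesgue non-singular. Consequently, the Birkhoff-typical set pulls back to a set of full $(m_p \otimes \mu_p)$-measure under $(\omega,x) \mapsto (\omega, T_0 x)$.

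Next I will verify that $\gamma := \int \log b \, d(m_p \otimes \mu_p)$ satisfies $0 < \gamma < \infty$. The event $\{b \in \{k,k+1\}\}$ corresponds to $\omega_1 + (-1)^{\omega_1} x \in \bigl(\tfrac{1}{k+1}, \tfrac{1}{k}\bigr]$, a union of two intervals each of Lebesgue length $\tfrac{1}{k(k+1)}$. The upper bound on $h_p$ from Proposition~\ref{p:boundedaway} then gives
\[\gamma \leq 2\|h_p\|_\infty \sum_{k \geq 1} \frac{\log(k+1)}{k(k+1)} < \infty.\]
For positivity, $\log b \geq 0$ everywhere, and the set $\{b \geq 2\}$ has positive $(m_p \otimes \mu_p)$-measure since $h_p > 0$ throughout $[0,1]$; hence $\gamma > 0$. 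Combined with the Birkhoff step above, this yields $\lim_n (d_1 \cdots d_n)^{1/n} = e^\gamma \in (1,\infty)$ almost surely.

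For the arithmetic mean I will show that $\int b \, d(m_p \otimes \mu_p) = \infty$. Using the lower bound $h_p \geq c > 0$ from Proposition~\ref{p:boundedaway}, each set $\{b \in \{k, k+1\}\}$ has $(m_p \otimes \mu_p)$-measure at least $c'/(k(k+1))$ for some $c' > 0$ (absorbing the $\min(p,1-p)$ factor), so
\[\int b \, d(m_p \otimes \mu_p) \geq c' \sum_{k \geq 1} \frac{k}{k(k+1)} = \infty.\]
The standard extension of Birkhoff's theorem to non-negative observables (via the truncations $\min(b,N)$ and monotone convergence) then gives $\frac{1}{n}\sum_{j=0}^{n-1} b(R^j(\omega,y)) \to \infty$ for $(m_p \otimes \mu_p)$-a.e.~$(\omega,y)$, and the same transfer from $(0\omega,x)$ to $(\omega, T_0 x)$ delivers $\lim_n \frac{d_1 + \cdots + d_n}{n} = \infty$ a.e.

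The main technical point---though not a serious one---is the transfer from the specific initial configuration $(0\omega, x)$ to a Birkhoff-generic orbit of $R$; this is what forces the appeal to the equivalence $\mu_p \sim \lambda$ and non-singularity of $T_0$. Beyond that, no step involves hard analysis: the integrability estimate for $\log b$ and the divergence of $\int b$ both reduce to comparison with the harmonic-type series already encoded in the geometry of the partition $\mathcal{I}_0 \cup \mathcal{I}_1$.
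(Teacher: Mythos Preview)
Your proof is correct and follows essentially the same route as the paper's: Birkhoff's ergodic theorem applied to $\log b$ (which lies in $L^1$ by the upper bound on $h_p$ and is strictly positive by the lower bound) and to $b$ via truncations, using Proposition~\ref{p:boundedaway} throughout. You are in fact more explicit than the paper about the passage from the Birkhoff-generic orbit $(\omega,y)$ to the orbit $(0\omega,x)$ appearing in~\eqref{q:dandb}; your use of $R(0\omega,x)=(\omega,T_0x)$ together with the non-singularity of $T_0$ is one clean way to handle this (an even shorter alternative is to note that $(\omega,x)\mapsto(0\omega,x)$ is itself $(m_p\otimes\mu_p)$-non-singular, since prepending a $0$ multiplies $m_p$-measure by~$p$).
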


\begin{proof}
We use the ergodicity of the map $R$. Recall the definition of the function $b: \Omega \times [0,1] \to \mathbb N \cup \{ \infty \}$ from (\ref{q:bi}). For the first statement, we first show that $\log b \in L^1$. Suppose $h_p(x) \le M$ for all $x \in [0,1]$. Then,
\begin{eqnarray*}
\int_{\Omega \times [0,1]} \log b \, d(m_p \otimes \mu_p) &\le & M \sum_{n \ge 1} \Big[ p^2 \int_{\frac1{n+1}}^{\frac1n} \log n \, dx + p(1-p) \int_{\frac1{n+1}}^{\frac1n} \log (n+1) \, dx\\
&& + p(1-p) \int_{\frac{n-1}{n}}^{\frac{n}{n+1}} \log n \, dx + (1-p)^2 \int_{\frac{n-1}{n}}^{\frac{n}{n+1}} \log (n+1) \, dx \Big]\\
&=& M \sum_{n \ge 1} \Big[ \frac{p\log n}{n(n+1)} + \frac{(1-p)\log (n+1)}{n(n+1)} \Big] < \infty.
\end{eqnarray*}
There also is an $m >0$, such that $h_p(x) \ge m$ for all $x \in [0,1]$. Similarly as above, we obtain
\[ \int_{\Omega \times [0,1]} \log b \, d(m_p \otimes \mu_p) \ge m \sum_{n \ge 1} \Big[ \frac{p\log n}{n(n+1)} + \frac{(1-p)\log (n+1)}{n(n+1)} \Big] >0.\]
Then by the Birkhoff Ergodic Theorem, for $(m_p \otimes \mu_p)$-a.e.~$(\omega,x)$,
\[ 0 < \lim_{n \to \infty} \frac1n \sum_{i=0}^{n-1} \log b\big(R^i (\omega,x)\big) = \int_{\Omega \times [0,1]} \log b \, d(m_p \otimes \mu_p) < \infty.\]
The result now follows from (\ref{q:dandb}).

For the other statement, one first notices that $b(\omega,x) \ge \frac1x -1$ if $\omega_1=0$ and $b(\omega,x) \ge \frac1{1-x}-1$ if $\omega_1=1$. Hence, $b \not \in L^1$. Therefore, consider the functions 
\[ b_N (\omega,x) = \left\{
\begin{array}{ll}
b(\omega,x) 1_{( \frac1{N+1},1]}(x), & \text{if } \omega_1=0,\\
b(\omega,x) 1_{[0, \frac{N}{N+1})}(x), & \text{if } \omega_1=1.\\
\end{array}\right.\]
Then each $b_N$ is bounded and the sequence $\{ b_N \}$ is increasing, so by Beppo-Levi's Theorem,
\begin{equation}\label{q:bN}
\lim_{N \to \infty} \int_{\Omega \times [0,1]} b_N \, d(m_p \otimes \mu_p) = \int_{\Omega \times [0,1]} b \, d(m_p \otimes \mu_p) = \infty.
\end{equation}
Moreover, the Birkhoff Ergodic Theorem gives the existence of a $(m_p \otimes \mu_p)$-measure 1 set of $(\omega,x)$, such that
\[ \lim_{n \to \infty} \frac1n \sum_{i=0}^{n-1} b_N \big( R^i(\omega,x)\big) = \int_{\Omega \times [0,1]} b_N \, d(m_p \otimes \mu_p)\]
for all $N \ge 1$. Then by (\ref{q:bN}) we get that for $(m_p \otimes \mu_p)$-a.e.~$(\omega,x)$,
\begin{eqnarray*}
\liminf_{n \to \infty} \frac1n \sum_{i=1}^n d_i(\omega,x) &=& \liminf_{n \to \infty} \frac1n \sum_{i=0}^{n-1} b\big(R^i (0\omega,x) \big)\\
& \ge & \lim_{N \to \infty} \lim_{n \to \infty} \frac1n \sum_{i=0}^{n-1} b_N \big(R^i (0\omega,x) \big) = \int_{\Omega \times [0,1]} b_N \, d(m_p \otimes \mu_p) = \infty.
\end{eqnarray*}
This gives the result.
\end{proof}

\section{Further Questions and Comments}
So far we have established that the density $h_p$ is of bounded variation and is bounded away from zero. It satisfies the equation
 \[ \aligned
  h_p(x) = \mathcal L_p h_p(x)&=\sum_{k=1}^\infty \left[
  \frac {p}{(k+x)^2} h_p\left(\frac {1}{k+x}\right)+
  \frac {1-p}{(k+x)^2} h_p\left(1-\frac {1}{x+k}\right) \right]\\
  &= p\mathcal L_0 h_p(x)+(1-p) \mathcal L_1h_p(x),  \endaligned
\]
where $\mathcal L_0$, $\mathcal L_1$ are transfer operators for $T_0$ and $T_1$ respectively. Both operators $\mathcal L_0$ and $\mathcal L_1$ preserve cones of positive smooth (analytic) functions on $[0,1]$. Considerable effort has been put in the analysis of the spectral properties of $\mathcal L_0$, which are now well
understood, see \cite{Ios1} for a recent overview. It would be interesting to study the spectral properties of $\mathcal L_p$ in more detail. Based on simulations, we suspect the following.
\begin{conjecture}
For each $0 < p < 1$ the function $h_p$ is a \emph{smooth function} on $[0,1]$.
\end{conjecture}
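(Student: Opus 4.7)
The strategy I would pursue is to bootstrap the known $BV$ regularity of $h_p$ to real analyticity via the fixed-point equation $h_p = \mathcal{L}_p h_p$, exploiting the explicit analytic form of the inverse branches. The plan is to fix a complex neighbourhood $U$ of $[0,1]$ (for instance Mayer's disk $\{|z-1|<3/2\}$) and verify that every inverse branch
\[
 \phi_{0,k}(z)=\frac{1}{k+z},\qquad \phi_{1,k}(z)=1-\frac{1}{k+z},\qquad k\ge1,
\]
extends to a holomorphic self-map of $U$ with derivative $1/(k+z)^2$ uniformly summable over $z\in U$. Mayer's theory gives that $\mathcal{L}_0$ is nuclear on $\mathcal{H}(U)\cap C(\overline U)$, and a direct estimate shows the same Banach space is invariant under $\mathcal{L}_1$. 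Therefore $\mathcal{L}_p=p\mathcal{L}_0+(1-p)\mathcal{L}_1$ acts boundedly on the space of functions analytic on $U$ and continuous on $\overline U$.

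The technical heart would be to show that $\mathcal{L}_p$ has $1$ as a simple leading eigenvalue on this analytic space with positive analytic eigenfunction $\tilde h_p$. My preferred route is a cone argument: let $\mathcal{C}$ be the cone of functions in $\mathcal{H}(U)\cap C(\overline U)$ that are positive on $[0,1]$ and satisfy a uniform bound $|f(z)|\le M\,f(\mathrm{Re}\,z)$ for $z\in U$. Both $\mathcal{L}_0$ and $\mathcal{L}_1$ map $\mathcal{C}$ into itself, and for $p\in(0,1)$ the Gauss piece $p\mathcal{L}_0$ is a strict contraction in the Hilbert projective metric on $\mathcal{C}$ (by Mayer's nuclearity and positivity), so the Birkhoff--Hopf theorem delivers a unique positive fixed ray $\mathbb{R}_+\tilde h_p$ in $\mathcal{C}$.

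The last step is identification. Since $\tilde h_p$ is analytic on $U$, it has bounded variation on $[0,1]$ and solves $\tilde h_p=\mathcal{L}_p\tilde h_p$. By the simplicity of the eigenvalue $1$ for $\mathcal{L}_p$ on $BV$, established at the end of Section~3 via quasi-compactness, $\tilde h_p$ must coincide with $h_p$ up to normalisation, giving $h_p\in\mathcal{H}(U)$ and in particular $h_p\in C^\infty([0,1])$.

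The main obstacle, and the reason the statement is a conjecture rather than a theorem, is the indifferent fixed point of $T_1$ at $x=0$: the branch $\phi_{1,1}(z)=z/(1+z)$ fixes $0$ with derivative $1$ and hence does not contract $U$ strictly, so $\mathcal{L}_1$ by itself is not quasi-compact on any natural space of analytic functions. One would have to show that for every $p>0$ the strictly contracting contribution $p\mathcal{L}_0$ dominates enough to restore a Hilbert projective contraction (or, alternatively, to replace $\mathcal{L}_p$ by a suitable iterate $\mathcal{L}_p^N$ in which the neutral branch is guaranteed to have been composed with a contracting branch), and to obtain uniform bounds that degenerate in a controlled manner as $p\downarrow 0$, consistent with the blow-up of $h_0(x)=1/x$ noted after Theorem~\ref{t:invm}.
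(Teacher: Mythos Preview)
The statement you are trying to prove is a \emph{conjecture} in the paper; the authors give no proof. They write that ``the exact smoothness condition is to be determined'' and add that the density is ``most probably \emph{not} real-analytic'' (in contrast to the Gauss case $p=1$). Your entire strategy aims at real analyticity on Mayer's disk, so even a successful execution would establish more than the authors believe to be true.

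More importantly, the plan does not get off the ground on the analytic space you propose. You claim every inverse branch is a holomorphic self-map of $U=\{|z-1|<3/2\}$, but for the neutral branch $\phi_{1,1}(z)=z/(1+z)=1-1/(1+z)$ one has $|\phi_{1,1}(z)-1|=1/|1+z|$, and $|1+z|$ attains the value $1/2$ at $z=-1/2\in\partial U$; hence points of $U$ near $-1/2$ are sent outside $\overline U$. In fact no bounded open $U\supset[0,1]$ can satisfy $\phi_{1,1}(U)\subset U$: conjugating by $w=1/z$ turns $\phi_{1,1}$ into the translation $w\mapsto w+1$, and the complement of $U$ in the $w$-plane would have to be a compact set $K\ni 0$ with $K-1\subset K$, which is impossible. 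Thus $\mathcal L_1$ does not even act on $\mathcal H(U)\cap C(\overline U)$, and the cone/Birkhoff--Hopf machinery cannot be set up there. This is the analytic counterpart of the obstruction the paper flags in its integral-operator heuristic, where the modified Bessel kernel $K_1(s,t)=I_1(2\sqrt{st})/\sqrt{st}$ fails to be integrable.

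The paper's own suggested route is different in spirit: it passes via Laplace duality to the operator $\mathcal K_p=p\mathcal K_0+(1-p)\mathcal K_1$ on $L^2(\mathbb R_+,\mu)$ with Bessel kernels $J_1$ and $I_1$, and leaves the spectral analysis of $\mathcal K_p$ open. If you want to salvage your approach for a $C^k$ (or $C^\infty$) statement rather than analyticity, the natural move is the bootstrap you mention at the outset---differentiate the fixed-point equation $h_p=\mathcal L_ph_p$ and control the derivatives branch by branch---while isolating the single parabolic branch $\phi_{1,1}$ and using the strictly contracting Gauss part $p\mathcal L_0$ to absorb it, rather than forcing the whole operator into a space of holomorphic functions.
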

The exact smoothness condition is to be determined. We presume that applying relatively standard  techniques one could strengthen the results of the present paper by showing that $h_p\in C^k([0,1])$ for all $k\in \mathbb N$. In fact, we believe that the density is $C^\infty$; however, most probably it is not real-analytic (which is the case for the Gauss map).

This conjecture is motivated as follows. In \cite{MR1,Jen1} the authors investigate the relation 
between $\mathcal L_0$ and the the integral operator $\mathcal K_0$ acting on the  Hilbert space $L^2(\mathbb R_+,\mu)$, given by
\[ \mathcal K_0 \phi(s) = \int_{0}^\infty \frac{J_1(2\sqrt{st})}{\sqrt{st}} \phi(t)\, d\mu(t), \]
where $J_1$ is the Bessel function of the first kind, and $\mu$ is the measure on $\mathbb R_+$ with the density
\[ d\mu = \frac {t}{e^{t}-1} dt.\]
The operator $\mathcal K_0$ has a symmetric kernel $K_0(s,t)=\frac {J_1(2\sqrt{st})}{\sqrt{st}}$, and has several nice properties, e.g., is nuclear. In a similar fashion, existence of a positive smooth fixed point of $\mathcal L_p$ will follow from the existence of a positive fixed point of $\mathcal K_p$
\begin{equation}\label{kernels}
\aligned
\mathcal K_p \phi(s)&=p\mathcal K_0\phi(s)+(1-p)\mathcal K_1 \phi(s) \\
&=
p\int_{0}^\infty \frac 
{J_1(2\sqrt{st})}{\sqrt{st}} \phi(t)\, d\mu(t)
+(1-p)\int_{0}^\infty \frac 
{I_1(2\sqrt{st})}{\sqrt{st}} \phi(t)e^{-t}\, d\mu(t),
\endaligned
\end{equation}
where $I_1$ is the modified Bessel function of the first kind. Technical difficulties arise from the fact that the kernel $K_1=\frac {I_1(2\sqrt{st})}{\sqrt{st}}$ albeit monotonic and positive (c.f., $K_0$ is oscillating), is not integrable.

An exact formula for the density $h_p$ would of course settle the conjecture. The most successful approach to constructing invariant densities for continued fraction transformations has been to build natural extensions of the transformations, see for example \cite{Nak81,Haa02,IS06,DKS09,KSS10,KSS12} and the references therein. This approach was also effective in determining the invariant density of the random $\beta$-transformation (see \cite{Kem14}), in which case the invariant density was not just a linear combination of the invariant densities of the two maps making up the random transformation. We have so far not been able to build a natural extension of the random continued fraction map.

With an expression for the density one could study frequencies of digits for the continued fraction expansions. It would also be interesting to consider the size of subsets of $[-1,1]$ obtained like those in Propositions~\ref{p:subsetsN} and \ref{p:1and2}. For example, for any two consecutive digits $n$ and $n+1$ is there set with positive Lebesgue measure such that all points in this set have an expansion using only these digits?

Even without a formula for the density, one could study the behaviour of $h_p$ as a function of $p$. Of particular interest is the case when $p \to 0$, since $h_0$ is unbounded and $h_p$ is of bounded variation for each $p>0$. A similar question can be asked for the metric entropy of $R$. Can one calculate this? The Shannon-McMillan-Breiman Theorem with the cylinder sets from the proof of Theorem~\ref{t:mixing} can be of help here. How does it behave as $p\to 0$? This last question could be seen as an analogue of the question of studying the entropy of $\alpha$ continued fractions as was done in \cite{CT13}.

\bibliographystyle{alpha}
\bibliography{cf1}

\begin{thebibliography}{BBD14}

\bibitem[ANV]{ANV14}
R.~Aimino, M.~Nicol, and S.~Vaienti.
\newblock Annealed and quenched limit theorems for random expanding dynamical
  systems.
\newblock To appear in Probab. Theory Related Fields.

\bibitem[Bal00]{Bal00}
V.~Baladi.
\newblock {\em Positive transfer operators and decay of correlations},
  volume~16 of {\em Advanced Series in Nonlinear Dynamics}.
\newblock World Scientific Publishing Co., Inc., River Edge, NJ, 2000.

\bibitem[BBD14]{BBD14}
W.~Bahsoun, C.~Bose, and Y.~Duan.
\newblock Decay of correlations for random intermittent maps.
\newblock {\em Nonlinearity}, 27(7):1543--1554, 2014.

\bibitem[BG97]{BG97}
A.~Boyarsky and P.~G{\'o}ra.
\newblock {\em Laws of chaos}.
\newblock Probability and its Applications. Birkh\"auser Boston, Inc., Boston,
  MA, 1997.
\newblock Invariant measures and dynamical systems in one dimension.

\bibitem[BG05]{BG05}
W.~Bahsoun and P.~G{\'o}ra.
\newblock Position dependent random maps in one and higher dimensions.
\newblock {\em Studia Math.}, 166(3):271--286, 2005.

\bibitem[CT13]{CT13}
C.~Carminati and G.~Tiozzo.
\newblock Tuning and plateaux for the entropy of {$\alpha$}-continued
  fractions.
\newblock {\em Nonlinearity}, 26(4):1049--1070, 2013.

\bibitem[DdV05]{DdV05}
K.~Dajani and M.~de~Vries.
\newblock Measures of maximal entropy for random {$\beta$}-expansions.
\newblock {\em J. Eur. Math. Soc. (JEMS)}, 7(1):51--68, 2005.

\bibitem[DdV07]{DdV07}
K.~Dajani and M.~de~Vries.
\newblock Invariant densities for random {$\beta$}-expansions.
\newblock {\em J. Eur. Math. Soc. (JEMS)}, 9(1):157--176, 2007.

\bibitem[DK00]{DK00}
K.~Dajani and C.~Kraaikamp.
\newblock ``{T}he mother of all continued fractions''.
\newblock {\em Colloq. Math.}, 84/85(part 1):109--123, 2000.
\newblock Dedicated to the memory of Anzelm Iwanik.

\bibitem[DK02]{DK02}
K.~Dajani and C.~Kraaikamp.
\newblock {\em Ergodic theory of numbers}, volume~29 of {\em Carus Mathematical
  Monographs}.
\newblock Mathematical Association of America, Washington, DC, 2002.

\bibitem[DK13]{DK13}
K.~Dajani and C.~Kalle.
\newblock Local dimensions for the random {$\beta$}-transformation.
\newblock {\em New York J. Math.}, 19:285--303, 2013.

\bibitem[DKS09]{DKS09}
K.~Dajani, C.~Kraaikamp, and W.~Steiner.
\newblock Metrical theory for {$\alpha$}-{R}osen fractions.
\newblock {\em J. Eur. Math. Soc. (JEMS)}, 11(6):1259--1283, 2009.

\bibitem[GB03]{GB03}
P.~G{\'o}ra and A.~Boyarsky.
\newblock Absolutely continuous invariant measures for random maps with
  position dependent probabilities.
\newblock {\em J. Math. Anal. Appl.}, 278(1):225--242, 2003.

\bibitem[Goo41]{Goo41}
I.~J. Good.
\newblock The fractional dimensional theory of continued fractions.
\newblock {\em Proc. Cambridge Philos. Soc.}, 37:199--228, 1941.

\bibitem[Haa02]{Haa02}
A.~Haas.
\newblock Invariant measures and natural extensions.
\newblock {\em Canad. Math. Bull.}, 45(1):97--108, 2002.

\bibitem[HK02]{HK02}
Y.~Hartono and C.~Kraaikamp.
\newblock On continued fractions with odd partial quotients.
\newblock {\em Rev. Roumaine Math. Pures Appl.}, 47(1):43--62 (2003), 2002.

\bibitem[Ino12]{Ino12}
T.~Inoue.
\newblock Invariant measures for position dependent random maps with continuous
  random parameters.
\newblock {\em Studia Math.}, 208(1):11--29, 2012.

\bibitem[Ios14]{Ios1}
M.~Iosifescu.
\newblock Spectral analysis for the {G}auss problem on continued fractions.
\newblock {\em Indag. Math. (N.S.)}, 25(4):825--831, 2014.

\bibitem[IS06]{IS06}
M.~Iosifescu and G.~Sebe.
\newblock An exact convergence rate in a {G}auss-{K}uzmin-{L}\'evy problem for
  some continued fraction expansion.
\newblock In {\em Mathematical analysis and applications}, volume 835 of {\em
  AIP Conf. Proc.}, pages 90--109. Amer. Inst. Phys., Melville, NY, 2006.

\bibitem[Jar32]{Jar32}
V.~Jarnik.
\newblock Zur {T}heorie der diophantischen {A}pproximationen.
\newblock {\em Monatsh. Math. Phys.}, 39(1):403--438, 1932.

\bibitem[JGU03]{Jen1}
O.~Jenkinson, L.~F. Gonzalez, and M.~Urba{\'n}ski.
\newblock On transfer operators for continued fractions with restricted digits.
\newblock {\em Proc. London Math. Soc. (3)}, 86(3):755--778, 2003.

\bibitem[JP01]{JP01}
O.~Jenkinson and M.~Pollicott.
\newblock Computing the dimension of dynamically defined sets: {$E_2$} and
  bounded continued fractions.
\newblock {\em Ergodic Theory Dynam. Systems}, 21(5):1429--1445, 2001.

\bibitem[Kem13]{Kem13}
T.~Kempton.
\newblock Counting {$\beta$}-expansions and the absolute continuity of
  {B}ernoulli convolutions.
\newblock {\em Monatsh. Math.}, 171(2):189--203, 2013.

\bibitem[Kem14]{Kem14}
T.~Kempton.
\newblock On the invariant density of the random {$\beta$}-transformation.
\newblock {\em Acta Math. Hungar.}, 142(2):403--419, 2014.

\bibitem[KSS10]{KSS10}
C.~Kraaikamp, T.~Schmidt, and I.~Smeets.
\newblock Natural extensions for {$\alpha$}-{R}osen continued fractions.
\newblock {\em J. Math. Soc. Japan}, 62(2):649--671, 2010.

\bibitem[KSS12]{KSS12}
C.~Kraaikamp, T.~Schmidt, and W.~Steiner.
\newblock Natural extensions and entropy of {$\alpha$}-continued fractions.
\newblock {\em Nonlinearity}, 25(8):2207--2243, 2012.

\bibitem[Leh94]{Leh94}
J.~Lehner.
\newblock Semiregular continued fractions whose partial denominators are {$1$}
  or {$2$}.
\newblock In {\em The mathematical legacy of {W}ilhelm {M}agnus: groups,
  geometry and special functions ({B}rooklyn, {NY}, 1992)}, volume 169 of {\em
  Contemp. Math.}, pages 407--410. Amer. Math. Soc., Providence, RI, 1994.

\bibitem[Mor]{Mor}
I.~Morris.
\newblock A rigorous version of r. p. brent’s model for the binary euclidean
  algorithm.
\newblock arXiv:1409.0729.

\bibitem[Mor85]{Mor85}
T.~Morita.
\newblock Random iteration of one-dimensional transformations.
\newblock {\em Osaka J. Math.}, 22(3):489--518, 1985.

\bibitem[MR87]{MR1}
D.~Mayer and G.~Roepstorff.
\newblock On the relaxation time of {G}auss's continued-fraction map. {I}.
  {T}he {H}ilbert space approach ({K}oopmanism).
\newblock {\em J. Statist. Phys.}, 47(1-2):149--171, 1987.

\bibitem[Nak81]{Nak81}
H.~Nakada.
\newblock Metrical theory for a class of continued fraction transformations and
  their natural extensions.
\newblock {\em Tokyo J. Math.}, 4(2):399--426, 1981.

\bibitem[Pel84]{Pel84}
S.~Pelikan.
\newblock Invariant densities for random maps of the interval.
\newblock {\em Trans. Amer. Math. Soc.}, 281(2):813--825, 1984.

\bibitem[Per50]{Per13}
O.~Perron.
\newblock {\em Die {L}ehre von den {K}ettenbr\"uchen}.
\newblock Chelsea Publishing Co., New York, N. Y., 1950.
\newblock 2d ed.

\bibitem[R{\'e}n57]{Ren57}
A.~R{\'e}nyi.
\newblock On algorithms for the generation of real numbers.
\newblock {\em Magyar Tud. Akad. Mat. Fiz. Oszt. K\"ozl.}, 7:265--293, 1957.

\bibitem[RT15]{RT}
J.~{Rousseau} and M.~{Todd}.
\newblock {Hitting times and periodicity in random dynamics}.
\newblock {\em ArXiv e-prints}, March 2015.

\bibitem[Sha92]{Sha92}
J.~Shallit.
\newblock Real numbers with bounded partial quotients: a survey.
\newblock {\em Enseign. Math. (2)}, 38(1-2):151--187, 1992.

\end{thebibliography}
\end{document}